\newtheorem{theorem}{Theorem}[section]
\newtheorem{lemma}[theorem]{Lemma}
\newtheorem{corollary}[theorem]{Corollary}
\newtheorem{example}[theorem]{Example}
\newenvironment{proof}{{\bf Proof}. \ignorespaces}{$\square$\par}
\begin{document}

\title[Rational Krylov subspace regularisation]{A conjugate-gradient-type rational Krylov subspace method for ill-posed problems}

\author{Volker Grimm}

\address{
        Institute for Applied and Numerical Mathematics,
        Karlsruhe Institute of Technology,
        D--76128 Karlsruhe,
        Germany}
\ead{volker.grimm@kit.edu}
\vspace{10pt}
\begin{indented}
\item[]August 2019
\end{indented}

\begin{abstract}
Conjugated gradients on the normal equation (CGNE) is a popular method to regularise linear inverse problems. The idea of the method can be summarised as minimising the residuum over a suitable Krylov subspace. It is shown that using the same idea for the shift-and-invert rational Krylov subspace yields an order-optimal regularisation scheme.  
\end{abstract}

\section{Introduction}
We consider the solution of the linear system 
\begin{equation} \label{lgs}
 Tx=y^\delta
\end{equation}
where the operator $T$ acts continuously between the Hilbert spaces $\mathcal{X}$ and $\mathcal{Y}$. 
The linear system is assumed to be ill-posed, that is, the range $\mathcal{R}(T)$ is not closed in $\mathcal{Y}$. $y^\delta$ is a perturbation of the exact data $y$, such that $\|y^\delta-y\| \leq \delta$. $y^\delta$ is also called the {\em noisy data} and $\delta$ the {\em noise level}. For exact data, we assume that $y$ is in the range of $T$, $y \in \mathcal{R}(T)$, which guarantees that there exists a unique $x^+ \in \mathcal{N}(T)^\perp$ such that $Tx^+=y$. $\mathcal{N}(T)^\perp$ designates the orthogonal complement of the null space $\mathcal{N}(T)$ of $T$. $x^+$
can also be characterised as the unique $x^+ \in \mathcal{N}(T)^\perp$ that solves the {\em normal equation}
\begin{equation} \label{normalequation}
T^*T x=T^*y\,.
\end{equation}
In fact, the normal equation
possesses a unique solution $x^+ \in \mathcal{N}(T)^\perp$ for every $y \in \mathcal{D}(T^+):=\mathcal{R}(T) \oplus \mathcal{R}(T)^\perp$, where $\mathcal{R}(T) \oplus \mathcal{R}(T)^\perp$ designates the direct orthogonal sum of $\mathcal{R}(T)$ and its orthogonal complement $\mathcal{R}(T)^\perp=\mathcal{N}(T^*)$.
The linear unbounded map $T^+:\mathcal{D}(T^+) \rightarrow \mathcal{N}(T)^\perp$, $y \mapsto x^+$, is the {\em Moore--Penrose (generalised) inverse} and $x^+$ is the {\em minimum-norm} solution.

In order to reconstruct the solution $x^+$ of the unperturbed problem $Tx^+=y$ as good as possible subject to a given noise level $\delta$, special procedures, called {\em regularisation schemes}, have to be used. Let $\{R_m\}_{m \in \mathbb{N}_0}$ be a family of linear or nonlinear operators from $\mathcal{Y}$ to $\mathcal{X}$ with $R_m0=0$. If there exists a mapping $m : \mathbb{R}^+ \times \mathcal{Y} \rightarrow \mathbb{N}_0$ such that
\[
 \limsup_{\delta \rightarrow 0} \{ \|R_{m(\delta,y^\delta)}y^\delta-x^+\|~|~ y^\delta \in \mathcal{Y}, \|y^\delta-Tx^+\|\leq \delta\}=0
\]
for any $x^+ \in \mathcal{N}(T)^\perp$, then the pair $(R_m,m(\delta,y^\delta))$ is called a (convergent) regularisation scheme for $T$. The mapping $m$ is called {\em parameter choice} or {\em stopping rule}. We will always use the {\em discrepancy principle} as our stopping rule, which is due to Morozov \cite{Morozov66}. The {\em discrepancy principle} reads: 
Choose a fixed $\tau > 1$ and set:
\begin{equation} \label{discprin}
  m(\delta,y^\delta) := \min\{m \in \mathbb{N}_0~|~\|y^\delta-Tx_m^\delta\| \leq \tau \delta \}\,,
\end{equation}
where $x_m^\delta := R_my^\delta$. The discrepancy principle leads to convergent regularisation schemes (cf. \cite{EHaNeu96}).
Regularisation schemes might converge arbitrarily slow unless the (unperturbed) data $x^+$ satisfies some smoothness assumptions. Convergence rates can be given when $x^+$ is in the {\em source set} 
$\mathcal{X}_{\mu,\rho}:=\{ x\in \mathcal{X}~|~x=(T^*T)^\mu w,~\|w\| \leq \rho\}$, $\mu > 0$. Regularisation schemes $(R_m,m(\delta, y^\delta))$ that attain the highest possible convergence speed are called of {\em optimal order} in $\mathcal{X}_{\mu,\rho}$ if 
\[
\sup\{ \|R_{m(\delta,y^\delta)}y^\delta-x^+\|~|~x^+\in \mathcal{X}_{\mu,\rho}, \|y^\delta-T x^+\| \leq \delta \} \leq C_{\mu}\delta^{\frac{2\mu}{2\mu+1}}\rho^{\frac{1}{2\mu+1}}\,,
\]
where $C_\mu$ neither depends on $\delta$ nor on $\rho$.

One of the most popular iterative regularisation schemes is {\em Conjugated Gradients on the Normal Equation} {\sc (CGNE)} that can be stated briefly as
\begin{equation} \label{minpropCGNE}
  x_m^\delta =: R_my^\delta, \qquad x_m^\delta = \mbox{argmin}_{x \in \mathcal{K}_m} \|y^\delta - Tx\|, \qquad m=1,2,\ldots \,,
\end{equation}
where $\mathcal{K}_m$ is the (polynomial) Krylov subspace
\[
  \mathcal{K}_m=\mathcal{K}_m(T^*T,T^*y^\delta)=\mbox{span}\{T^*y^\delta, (T^*T)T^*y^\delta,\ldots,
 (T^*T)^{m-1}T^*y^\delta \}\,.
\]
An efficient algorithm is available to compute these approximations (cf. \cite{HeStie52}). 
{\sc CGNE} with the discrepancy principle as a stopping rule is an order-optimal regularisation scheme for all $\mu >0$ 
(cf. Theorem~7.12 in \cite{EHaNeu96},\cite{Nemi86}).
And, due to its definition, {\sc CGNE} is the fastest to satisfy the discrepancy principle with respect to all regularisation schemes that compute approximations in the Krylov subspace $\mathcal{K}_m$. The analysis of CGNE with respect to its regularisation properties is involved, since the operators $R_m$ are nonlinear and not necessarily continuous (cf. Theorem~7.6 in \cite{EHaNeu96}, \cite{EiLouPla90}).

In this paper, we will define a method of the same type, but for the {\em shift-and-invert} or {\em resolvent} Krylov subspace
\begin{eqnarray} \label{ResKry} 
 \mathcal{Q}_m &= \mathcal{K}_m
    \left( 
      \left(
        I+T^*T\slash \gamma
      \right)^{-1}, T^*y^\delta
    \right) \\
     &=
   \mathrm{span} 
   \left\{
       T^*y^\delta, \left( I + T^*T\slash \gamma \right)^{-1}T^*y^\delta, \cdots, 
       \left( I + T^*T\slash \gamma \right)^{-m+1} T^*y^\delta 
    \right\}\,, \nonumber 
\end{eqnarray}
where $\gamma > 0$ is a fixed real number (e.g. \cite{BotKni20,Gri12,GGautosmooth17,LiuChuan19,moretnovati04,MoNo19,RamRei19,Ruhe84}). We define our method by 
\begin{equation} \label{minprop}
  x_m^\delta =: R_my^\delta, \qquad x_m^\delta = \mbox{argmin}_{x \in \mathcal{Q}_m} \|y^\delta - Tx\|,
  \qquad m=1,2,\ldots \,,
\end{equation}
combined with the discrepancy principle as its stopping rule. (The minimizer $x_m^\delta$ is uniquely defined, cf. lemma~\ref{alg:minprop} below.)
The subspace $\mathcal{Q}_m$ belongs to the class of rational Krylov subspaces which have been studied in recent years (e.g. references in \cite{TanjaDiss,guettelpole13}).
Since this method can be seen as solving  the normal equation $(\ref{normalequation})$ approximatively in the shift-and-invert Krylov subspace $\mathcal{Q}_m$, the method will be called {\em Shift-and-Invert on the Normal Equation} {\sc (SINE)}.
 Several regularisation schemes have been proposed that compute approximations in the subspace $\mathcal{Q}_m$, cf. example~\ref{methodswithqm}. 
By definition, our method will be the fastest to stop with respect to the discrepancy principle. 
SINE is related (but not identical) to {\sc CGNE} preconditioned by $(I+T^*T\slash \gamma)^{-1}$. Actually, SINE is not a preconditioning technique in the usual sense. Nevertheless, 
rational Krylov subspaces have been observed of being capable of accelerating the convergence (e.g.~\cite{ggaccel17}).  
The analysis of SINE with respect to its regularisation properties shares the difficulties of the analysis of CGNE, the family of operators $R_m$ is again nonlinear and not continuous in general, which can be seen by generalising the ideas of the proof of theorem~7.6 in \cite{EHaNeu96} and \cite{EiLouPla90}.

\begin{example} \label{methodswithqm}
Some regularisation schemes with approximations in the subspace $\mathcal{Q}_m$.
\begin{enumerate}
  \item Iterated Tikhonov-Phillips regularisation (cf. \cite{BuDoRei17,Fakeev81,KingChill79,Krjanev74}).
  \item Applying the implicit Euler method, the implicit midpoint-rule, or the trapezoidal rule with fixed time-step to asymptotic regularisation (Showalter's regularisation) leads to approximations in $\mathcal{Q}_m$ (cf. \cite{Rieder05}).
  \item The method proposed by Riley in \cite{Riley55} applied to the normal equation.
  \item The rational Arnoldi approach proposed in \cite{BreNoRe12}.
\end{enumerate}
\end{example}

In section~\ref{basicprop}, we will show that $x_m^\delta$ in $(\ref{minprop})$ can be computed efficiently and discuss some basic properties of the method. Convergence for unperturbed data is shown in section~\ref{sec:conv} before the SINE method is discussed with respect to its regularisation properties in section~\ref{sec:sine}.
In section~\ref{sec:stop}, upper bounds on the number of iterations of SINE are discussed by comparing them to the known upper bounds on the number of iterations of CGNE.
 The findings are illustrated by an experiment in section~\ref{sec:numexp}.

Throughout we will use notations identical or closely related to the notations in 
\cite{EHaNeu96}. In particular, the functional calculus described in section~2.3 of \cite{EHaNeu96} is used without further note. Our proofs will follow closely or sometimes literally the corresponding proofs for {\sc CGNE} in chapter~7 of \cite{EHaNeu96}.

\section{Basic properties} \label{basicprop}

We consider algorithm~\ref{alg:sine}, where we choose $x_0^\delta=0$ without loss of generality. If $x_0^\delta$ were not zero, the corresponding shift-and-invert Krylov subspace would be spanned with $r_0=y^\delta-Tx_0^\delta$ instead of $y^\delta$, which allows to use prior information on the solution.  
First, we aim for the following properties:
Algorithm~\ref{alg:sine} computes $x_m^\delta$ according to $(\ref{minprop})$, as long as the algorithm does not break down. If Algorithm~\ref{alg:sine} breaks down in step $\kappa$ with $q_\kappa=0$, we have $x_{\kappa}^\delta=T^+y^\delta$ as well as $x_m^\delta=T^+y^\delta$ for $m \geq \kappa$ in $(\ref{minprop})$. 

\begin{algorithm}
\begin{algorithmic}
\STATE Choose $x_0^\delta$, set $r_0=y^\delta-Tx_0^\delta$, $w_0=T^*r_0$.
\FOR{$j=0,1,2,\ldots $}
\STATE $q_j = Tw_j$
\STATE $\delta_j=(q_j,q_j)$
\STATE $\alpha_j = \left( r_j, q_j \right) \slash \delta_j $
\STATE $x_{j+1}^\delta = x_j^\delta+\alpha_j w_j$
\STATE $r_{j+1} = r_j-\alpha_j q_j$
\STATE $s_j=T^*q_j$
\STATE $t_{j+1} = (I+T^*T\slash \gamma)^{-1} T^* r_{j+1}$
\STATE $\beta_j=
           \left( t_{j+1}, s_j \right) 
        \slash 
           \delta_j
        $
\STATE  $w_{j+1}=t_{j+1}-\beta_j w_j$
\ENDFOR
\end{algorithmic}
\caption{Shift-and-Invert on the Normal Equation {\sc (SINE)}}
\label{alg:sine}
\end{algorithm}

\begin{samepage}
\begin{lemma} \label{alg:rec}
As long as $q_{m-1} \not=0$
\begin{itemize}
\item[(i)] $(r_m,q_j)=(T^*r_m,w_j)=0$, $j=0,\ldots,m-1$
\item[(ii)] $\alpha_j \neq 0$, $j=0,\ldots,m-1$	
\item[(iii)] $(q_m,q_j)=0$, $j=0,\ldots,m-1$ 
\end{itemize}
\end{lemma}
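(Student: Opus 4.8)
The plan is to prove the three statements simultaneously by induction on $m$, following the template of the corresponding {\sc CGNE} lemma. Since $q_j=Tw_j$ gives $(r_m,q_j)=(r_m,Tw_j)=(T^*r_m,w_j)$, the two equalities in (i) carry the same content, so (i) is really the Galerkin condition $T^*r_m\perp\{w_0,\dots,w_{m-1}\}$. Throughout I read the hypothesis ``as long as $q_{m-1}\neq0$'' as saying the algorithm has not broken down through step $m-1$, so that $q_0,\dots,q_{m-1}\neq0$, every $\delta_j=\|q_j\|^2>0$, and every $\alpha_j,\beta_j$ with $j\le m-1$ is well defined. The base case $m=1$ is immediate: $(r_1,q_0)=(r_0,q_0)-\alpha_0\delta_0=0$ by the definition of $\alpha_0$; since $q_0=TT^*r_0\neq0$ forces $T^*r_0\neq0$ one gets $(r_0,q_0)=\|T^*r_0\|^2>0$ and hence $\alpha_0\neq0$; and $(q_1,q_0)=(Tt_1,q_0)-\beta_0\delta_0=(t_1,s_0)-\beta_0\delta_0=0$ by the definition of $\beta_0$.

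For the induction step assume (i)--(iii) hold for all indices below $m$. Statement (i) follows from the residual recursion $r_m=r_{m-1}-\alpha_{m-1}q_{m-1}$: the case $j=m-1$ is again the defining relation for $\alpha_{m-1}$, while for $j<m-1$ one has $(r_m,q_j)=(r_{m-1},q_j)-\alpha_{m-1}(q_{m-1},q_j)$ and both terms vanish by the inductive (i) and (iii) at level $m-1$. For (ii) it suffices to show $(r_{m-1},q_{m-1})\neq0$. Writing $w_{m-1}=t_{m-1}-\beta_{m-2}w_{m-2}$ and using (i) at level $m-1$ to annihilate the $w_{m-2}$-term, I get $(r_{m-1},q_{m-1})=(T^*r_{m-1},w_{m-1})=(T^*r_{m-1},t_{m-1})=(T^*r_{m-1},(I+T^*T/\gamma)^{-1}T^*r_{m-1})$; since $(I+T^*T/\gamma)^{-1}$ is self-adjoint and positive definite this equals $\|(I+T^*T/\gamma)^{-1/2}T^*r_{m-1}\|^2$, positive unless $T^*r_{m-1}=0$. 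To exclude the latter I argue by contradiction: $T^*r_{m-1}=0$ gives $t_{m-1}=0$, hence $w_{m-1}=-\beta_{m-2}w_{m-2}$ and $q_{m-1}=-\beta_{m-2}q_{m-2}$; pairing with $q_{m-2}$ and invoking (iii) at level $m-1$ (so $(q_{m-1},q_{m-2})=0$) forces $\beta_{m-2}=0$, whence $q_{m-1}=0$, contradicting the hypothesis.

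The conjugacy (iii) is the crux. Expanding $q_m=Tt_m-\beta_{m-1}q_{m-1}$, the case $j=m-1$ is the definition of $\beta_{m-1}$, and for $j<m-1$ the $q_{m-1}$-term drops by (iii) at level $m-1$, leaving $(q_m,q_j)=(Tt_m,q_j)=(t_m,s_j)$. Two tools handle $(t_m,s_j)$: the residual recursion, which after applying $T^*$ yields $s_j=T^*q_j=\alpha_j^{-1}(T^*r_j-T^*r_{j+1})$ (using $\alpha_j\neq0$ from (ii)), and the algebraic identity $(I+T^*T/\gamma)^{-1}T^*T=\gamma(I-(I+T^*T/\gamma)^{-1})$, i.e.\ $T^*T=\gamma((I+T^*T/\gamma)-I)$. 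For $1\le k\le m-1$ the freshly proved (i) gives $(T^*r_m,t_k)=(T^*r_m,w_k)+\beta_{k-1}(T^*r_m,w_{k-1})=0$, and since $(t_m,T^*r_k)=(T^*r_m,(I+T^*T/\gamma)^{-1}T^*r_k)=(T^*r_m,t_k)$ for $k\ge1$, both summands of $(t_m,s_j)=\alpha_j^{-1}[(t_m,T^*r_j)-(t_m,T^*r_{j+1})]$ vanish for $1\le j\le m-2$.

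The only point needing separate care is the first direction $j=0$, because $w_0=T^*r_0$ does not fit the pattern $t_k=(I+T^*T/\gamma)^{-1}T^*r_k$. Here I use $t_1=(I+T^*T/\gamma)^{-1}(T^*r_0-\alpha_0 s_0)$ with $s_0=T^*TT^*r_0$ and the above identity to compute $(1+\alpha_0\gamma)(I+T^*T/\gamma)^{-1}T^*r_0=t_1+\alpha_0\gamma\,w_0$, so that $(I+T^*T/\gamma)^{-1}T^*r_0\in\mathrm{span}\{w_0,t_1\}$; since $T^*r_m$ is orthogonal to $w_0$ (by (i), as $(T^*r_m,w_0)=(r_m,q_0)$) and to $t_1$, the term $(t_m,s_0)=(T^*r_m,(I+T^*T/\gamma)^{-1}T^*r_0)$ vanishes, finishing (iii). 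I expect this bookkeeping around the exceptional initial direction $w_0$, rather than any deep analytic difficulty, to be the main obstacle; everything else reduces to the self-adjoint positive definiteness of $(I+T^*T/\gamma)^{-1}$ and the identity relating $T^*T$ to the resolvent.
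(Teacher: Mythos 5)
Your proof is correct and follows essentially the same inductive argument as the paper: the same base case, the same contradiction via $(T^*r_{m-1},t_{m-1})=\|t_{m-1}\|^2+\gamma^{-1}\|Tt_{m-1}\|^2$ for (ii), and for (iii) the identity $s_j=\alpha_j^{-1}(T^*r_j-T^*r_{j+1})$ combined with self-adjointness of the resolvent, which is just the transposed form of the paper's expansion of $(I+T^*T/\gamma)^{-1}T^*q_j$ in the $w$-basis. A small bonus is that you carry out explicitly the $j=0$ computation showing $(I+T^*T/\gamma)^{-1}T^*r_0\in\mathrm{span}\{w_0,w_1\}$, which the paper dismisses as ``a simple but tedious calculation''; the only blemish is a dropped factor $\alpha_0^{-1}$ (and the vanishing $T^*r_1$-term) in your final displayed identity for $(t_m,s_0)$, which is harmless since the quantity is shown to be zero.
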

\end{samepage}
\begin{proof}
The proof is via induction on $m$. For $m=1$, we have
\[
 (r_1,q_0)=(r_0,q_0)-\alpha_0(q_0,q_0)=0, \qquad 
 \alpha_0=\frac{(r_0,q_0)}{(q_0,q_0)}\,.
\]
Assume $\alpha_0=0$ then $(r_0,q_0)=0$ (since $(q_0,q_0) \neq 0$). It follows
\[
 0=(r_0,q_0)=(y^\delta,TT^*r_0)=(T^* r_0,T^* r_0)	
\]
and therefore $w_0=T^*r_0=0$ and $q_0=Tw_0=0$. This is a contradiction to our assumption $q_0 \neq 0$ and therefore $\alpha_0 \neq 0$ needs to be correct. Further,
\begin{eqnarray*}
(q_1,q_0) &=& (Tw_1,Tw_0) = (Tt_1-\beta_0Tw_0,Tw_0) \\
          &=& (Tt_1,Tw_0)-\beta_0(Tw_0,Tw_0), \qquad 
            \beta_0=\frac{(t_1,s_0)}{(q_0,q_0)} \\
          &=& (Tt_1,q_0)-(t_1,T^*q_0)=0\,,
\end{eqnarray*}
which concludes the proof of our statements for $m=1$.
Now we assume that the assertions are satisfied for $m$. Then, we have
\begin{eqnarray*}
(r_{m+1},q_j) &=& (r_m,q_j)-\alpha_m(q_m,q_j), \qquad
                \alpha_m=\frac{(r_m,q_m)}{(q_m,q_m)} \\
              &=& 
   \left\{
     \begin{array}{lcc}
       (r_m,q_m)-\alpha_m(q_m,q_m)=0, &~~~~& j=m,\\
        0, &~~~~& j<m
     \end{array}
   \right.\,.
\end{eqnarray*}
$\alpha_j \neq 0$ for $j=0,\ldots,m-1$ follows from the induction hypothesis. We show again by contradiction, that $\alpha_m$ can not be zero. Assume $\alpha_m=0$, hence $(r_m,q_m)=0$. We have
\begin{eqnarray*}
 0 &=& (T^*r_m,w_m)=(T^*r_m,t_m-\beta_{m-1}w_{m-1})=(T^*r_m,t_m) \\
   &=& ((I+T^*T\slash \gamma) t_m,t_m)=(t_m,t_m)+\frac{1}{\gamma}(Tt_m,Tt_m) \\
   &=& \|t_m\|^2+\frac{1}{\gamma}\|Tt_m\|^2\,.	
\end{eqnarray*}	
	From this, we immediately have $t_m=0$, $\beta_{m-1}=0$, $w_m=0$ and finally the contradiction $q_m=Tw_m=0$. Since this is not true, due to our assumption, $\alpha_m\neq 0$ is proved. Now, we target $(iii)$. It follows
\begin{eqnarray*}
(q_{m+1},q_m) &=& (Tt_{m+1}-\beta_mTw_m, q_m) \\
 &=& (Tt_{m+1},q_m)-\beta_m(q_m,q_m), \qquad 
    \beta_m=\frac{(t_{m+1},T^*q_m)}{(q_m,q_m)} \\
 &=& 0
\end{eqnarray*}
and for $j < m$, we have
\begin{eqnarray}
 (q_{m+1},q_j) &=& (Tt_{m+1},Tw_j)-\beta_m (q_m,q_j) \nonumber \\
  &=& (Tt_{m+1},Tw_j)=(T(I+T^*T \slash \gamma)^{-1}T^*r_{m+1},Tw_j) \label{lemmabasicprop1}\\
  &=& (T^* r_{m+1}, (I+T^*T \slash \gamma)^{-1}T^*q_j)\,. \nonumber
\end{eqnarray}
For $1 \leq j < m$, it follows from the iteration and $\alpha_j \neq 0$ that
\begin{eqnarray*}
(I+T^*T \slash \gamma)^{-1}T^*q_j
	&=& -\frac{1}{\alpha_j}\left[ t_{j+1}-t_j\right] \\	  
	&=& -\frac{1}{\alpha_j}\left[ w_{j+1}+(\beta_j-1)w_j-\beta_{j-1}w_{j-1} \right] \\
	&=& -\frac{1}{\alpha_j}w_{j+1}-\frac{\beta_j-1}{\alpha_j}w_j+\frac{\beta_{j-1}}{\alpha_j}w_{j-1}\,.
\end{eqnarray*}
Inserting this into $(\ref{lemmabasicprop1})$ gives
\begin{eqnarray*}
(q_{m+1},q_j) &=& -\frac{1}{\alpha_j}(T^*r_{m+1},w_{j+1})-\frac{\beta_j-1}{\alpha_j}(T^*r_{m+1},w_j) \\
              && \hspace{5.9cm} +\frac{\beta_{j-1}}{\alpha_j}(T^*r_{m+1},w_{j-1}) \\
	      &=& -\frac{1}{\alpha_j}(r_{m+1},q_{j+1})-\frac{\beta_j-1}{\alpha_j}(r_{m+1},q_j)+
	       \frac{\beta_{j-1}}{\alpha_j}(r_{m+1},q_{j-1}) \\
	      &=& 0\,.
\end{eqnarray*}
For $j=0$, a simple but tedious calculation shows that
\[
 t_0:=(I+T^*T \slash \gamma)^{-1}T^*q_0=c_1 w_1 + c_2 w_0, \qquad c_1,c_2 \in \mathbb{R}\,.	
\]
With this, $(q_{m+1},q_0)=0$ follows as above with different coefficients.
\end{proof}
\begin{lemma} \label{alg:basisprop}
 As long as algorithm~\ref{alg:sine} does not break down with $q_m=0$, we have
\[
 \mathcal{Q}_{m+1}=\mbox{span} \left\{ w_0, \ldots, w_m \right\}\,,
\]
where $w_0,\ldots, w_m$ is a basis of $\mathcal{Q}_{m+1}$.
\end{lemma}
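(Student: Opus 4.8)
The plan is to prove the two assertions --- that the span is correct and that the $w_j$ are independent --- by combining a dimension count with the orthogonality already established in Lemma~\ref{alg:rec}. Write $A := T^*T$, $B := (I+A/\gamma)^{-1}$ and $v := T^*y^\delta$, so that $\mathcal{Q}_{m+1}=\mathrm{span}\{v,Bv,\ldots,B^{m}v\}$ and in particular $\dim\mathcal{Q}_{m+1}\le m+1$. I would first show the inclusion $\mathrm{span}\{w_0,\ldots,w_m\}\subseteq\mathcal{Q}_{m+1}$ and then that $w_0,\ldots,w_m$ are linearly independent. Since these are $m+1$ independent vectors lying in a space of dimension at most $m+1$, they must span all of $\mathcal{Q}_{m+1}$ and form a basis, which is exactly the claim.

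The inclusion rests on the algebraic identity $BA=\gamma(I-B)$, immediate from $(I+A/\gamma)^{-1}(A/\gamma)=(I+A/\gamma)^{-1}[(I+A/\gamma)-I]=I-B$, together with the elementary fact that $B$ raises the $\mathcal{Q}$-index by one, i.e. $B\,\mathcal{Q}_{k}\subseteq\mathcal{Q}_{k+1}$. Using $t_j=BT^*r_j$, $q_j=Tw_j$ and $r_{j+1}=r_j-\alpha_jq_j$, I would derive the recursion $t_{j+1}=t_j-\alpha_jBAw_j=t_j-\alpha_j\gamma w_j+\alpha_j\gamma Bw_j$, and then run a simultaneous induction on $j$ with hypothesis ``$w_i\in\mathcal{Q}_{i+1}$ for all $i\le j$ and $t_j\in\mathcal{Q}_{j+1}$''. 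The base case is $w_0=v\in\mathcal{Q}_1$ and $t_1=(1+\alpha_0\gamma)Bv-\alpha_0\gamma v\in\mathcal{Q}_2$; in the inductive step the displayed recursion gives $t_{j+1}\in\mathcal{Q}_{j+2}$ because $w_j,t_j\in\mathcal{Q}_{j+1}$ and $Bw_j\in\mathcal{Q}_{j+2}$, and then $w_{j+1}=t_{j+1}-\beta_jw_j\in\mathcal{Q}_{j+2}$.

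For independence I would invoke Lemma~\ref{alg:rec}(iii): as long as no breakdown has occurred the vectors $q_0,\ldots,q_m$ are pairwise orthogonal and nonzero, hence linearly independent, so any relation $\sum_j c_jw_j=0$ yields $\sum_j c_jq_j=0$ after applying $T$ and forces all $c_j=0$. The main obstacle is not any single hard estimate but keeping the subspace bookkeeping airtight in the induction --- above all, verifying that $B$ raises the index by exactly one and tracking $t_j$ in parallel with $w_j$. The identity $BA=\gamma(I-B)$ is precisely what prevents the factor $A=T^*T$ hidden inside $T^*q_j$ from driving the iterates out of the expected rational Krylov subspace.
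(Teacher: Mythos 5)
Your proposal is correct and follows essentially the same route as the paper: inclusion of $\mathrm{span}\{w_0,\ldots,w_m\}$ in $\mathcal{Q}_{m+1}$, linear independence of the $w_j$ deduced from the orthogonality of the nonzero vectors $q_j=Tw_j$ in Lemma~\ref{alg:rec}(iii), and a dimension count $\dim\mathcal{Q}_{m+1}\le m+1$ to conclude. The only difference is that you spell out the inclusion (via $BA=\gamma(I-B)$ and the joint induction on $w_j$ and $t_j$) which the paper dismisses as following ``readily from the algorithm''; your verification of that step is accurate.
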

\begin{proof}
It follows readily from algorithm~\ref{alg:sine} that
\[
	M:=\mbox{span} \left\{ w_0, \ldots, w_m \right\} \subseteq \mathcal{Q}_{m+1}\,.
\]
Since $q_0=Tw_0,\ldots,q_m=Tw_m$ is an orthogonal basis of $TM$, due to $(iii)$ of lemma~\ref{alg:rec}, $w_0,\ldots,w_m$ are linearly independent. Since the dimension of $\mathcal{Q}_{m+1}$ is smaller or equal to $m+1$, $w_0,\ldots, w_m$ needs to be a basis of $\mathcal{Q}_{m+1}$.
\end{proof}
\begin{lemma} \label{alg:minprop}
The iterates $x_m^\delta$ of algorithm~\ref{alg:sine} satisfy $(\ref{minprop})$. 
\end{lemma}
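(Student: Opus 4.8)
The plan is to invoke the standard variational characterisation of the least-squares minimiser over a finite-dimensional subspace: an element $\hat{x} \in \mathcal{Q}_m$ minimises $\|y^\delta - Tx\|$ over $\mathcal{Q}_m$ if and only if its residual is orthogonal to $T\mathcal{Q}_m$, that is, $(y^\delta - T\hat{x}, Tw) = 0$ for all $w \in \mathcal{Q}_m$. This follows from differentiating the convex functional $t \mapsto \|y^\delta - T(\hat{x}+tw)\|^2$ at $t=0$. So the proof reduces to checking that the SINE iterate lies in the right subspace and that its residual satisfies this orthogonality condition.

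First I would verify that the iterate lies in the correct subspace and identify its residual. Unrolling the update $x_{j+1}^\delta = x_j^\delta + \alpha_j w_j$ from $x_0^\delta = 0$ gives $x_m^\delta = \sum_{j=0}^{m-1} \alpha_j w_j$, which by lemma~\ref{alg:basisprop} lies in $\mathcal{Q}_m = \mathrm{span}\{w_0, \ldots, w_{m-1}\}$; here one must mind the index shift between $\mathcal{Q}_m$ and the basis $\{w_0,\ldots,w_{m-1}\}$. Telescoping the residual recursion $r_{j+1} = r_j - \alpha_j q_j$ together with $q_j = Tw_j$ and $r_0 = y^\delta$ yields $r_m = y^\delta - T x_m^\delta$, so $r_m$ is exactly the residual attached to the candidate minimiser. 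By lemma~\ref{alg:basisprop} one has $T\mathcal{Q}_m = \mathrm{span}\{q_0, \ldots, q_{m-1}\}$, so orthogonality of $r_m$ to $T\mathcal{Q}_m$ amounts to the identities $(r_m, q_j) = 0$ for $j = 0, \ldots, m-1$, which are precisely statement~(i) of lemma~\ref{alg:rec}. Hence $x_m^\delta$ satisfies the optimality condition and solves $(\ref{minprop})$.

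For the uniqueness asserted when the method was defined, I would observe that the vectors $q_j = Tw_j$ are mutually orthogonal and nonzero by part~(iii) of lemma~\ref{alg:rec} (as long as the algorithm has not broken down), hence linearly independent; consequently $T$ is injective on $\mathcal{Q}_m$, so $x \mapsto \|y^\delta - Tx\|^2$ is strictly convex and coercive on the finite-dimensional space $\mathcal{Q}_m$ and possesses a unique minimiser. I expect no genuine obstacle here: the statement is simply the abstract Hilbert-space form of the normal equations for least squares, and both ingredients it needs — the subspace identity $\mathcal{Q}_m = \mathrm{span}\{w_0,\ldots,w_{m-1}\}$ and the orthogonality of $r_m$ to $T\mathcal{Q}_m$ — have already been furnished by the two preceding lemmas. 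The only mild care required is the index bookkeeping and confirming the telescoped identity $r_m = y^\delta - T x_m^\delta$.
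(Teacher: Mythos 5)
Your proposal is correct and follows essentially the same route as the paper: the paper expands $\|y^\delta-Tz_m\|^2$ directly, kills the cross term using lemma~\ref{alg:rec}\,(i) (the orthogonality of $r_m$ to $T\mathcal{Q}_m$ that you invoke via the normal equations), and gets strictness from the injectivity of $T$ on $\mathcal{Q}_m$ (which the paper derives from $\mathcal{Q}_m\subseteq\mathcal{N}(T)^\perp$ and you derive from the orthogonality and nonvanishing of the $q_j$). The packaging differs — abstract variational characterisation versus explicit expansion of the square — but the mathematical content and the ingredients cited are the same.
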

\begin{proof}
Due to algorithm~\ref{alg:sine} with $x_0^\delta=0$, we have $x_m^\delta \in \mbox{span}\{ w_0,\ldots,w_{m-1}\}=\mathcal{Q}_m$. Now, 
let $z_m \in \mathcal{Q}_m=\mbox{span}\{ w_0,\ldots,w_{m-1}\}$ such that  $z_m \neq x_m^\delta \in \mathcal{Q}_m$. Hence,  we can write
\[
0 \neq z_m-x_m^\delta = \sum_{j=0}^{m-1} \xi_j w_j, \qquad \xi_j \in \mathbb{R},
\]
and obtain
\begin{eqnarray*}
  \|y^\delta-Tz_m\|^2 &=& \|y^\delta-Tx_m^\delta\|^2-2\sum_{j=0}^{m-1} \xi_j (Tw_j,r_m)+
  \|T\sum_{j=0}^{m-1} \xi_j w_j\|^2 \\
  & > & \|y^\delta-Tx_m^\delta\|^2-2\sum_{j=0}^{m-1} \xi_j (q_j,r_m)=\|y^\delta-Tx_m^\delta\|^2 
\end{eqnarray*}
by lemma~\ref{alg:rec}. The strict inequality, and therefore uniqueness of the minimizer, follows since $\mathcal{Q}_m \subseteq \mathcal{N}(T)^\perp$ and  $\|T\sum_{j=0}^{m-1} \xi_j w_j\|^2 > 0$ as a consequence.
\end{proof}
\begin{lemma}\label{alg:breakdown} If algorithm~\ref{alg:sine} breaks down in step $\kappa$ with $q_\kappa=0$, then $x_\kappa^\delta=x^+=T^+y^\delta$.
\end{lemma}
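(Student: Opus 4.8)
The plan is to reduce everything to the single identity $T^*r_\kappa = 0$, where $r_\kappa = y^\delta - Tx_\kappa^\delta$ is the residual produced by the algorithm. Indeed, by lemma~\ref{alg:minprop} we have $x_\kappa^\delta \in \mathcal{Q}_\kappa$, and $\mathcal{Q}_\kappa \subseteq \mathcal{N}(T)^\perp$ (as already used in the proof of lemma~\ref{alg:minprop}, since $(I+T^*T/\gamma)^{-1}$ leaves $\mathcal{N}(T)^\perp$ invariant and $T^*y^\delta \in \mathcal{R}(T^*) \subseteq \mathcal{N}(T)^\perp$). Hence, once $T^*r_\kappa = 0$ is shown, the vector $x_\kappa^\delta$ simultaneously solves the normal equation $T^*Tx = T^*y^\delta$ and lies in $\mathcal{N}(T)^\perp$. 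By the characterisation of the Moore--Penrose inverse recalled in the introduction this forces $x_\kappa^\delta = T^+y^\delta = x^+$; note that the solvability of the normal equation obtained along the way retroactively guarantees $y^\delta \in \mathcal{D}(T^+)$, so that $T^+y^\delta$ is in fact well defined.

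To establish $T^*r_\kappa = 0$ I would reproduce, for $\kappa \geq 1$, exactly the contradiction computation used to prove $\alpha_m \neq 0$ in lemma~\ref{alg:rec}: the breakdown $q_\kappa = 0$ is precisely the situation that argument excluded, and it is available because $q_0,\ldots,q_{\kappa-1} \neq 0$ at the first breakdown. First, from $q_\kappa = Tw_\kappa = 0$ one gets
\[
 (T^*r_\kappa, w_\kappa) = (r_\kappa, Tw_\kappa) = (r_\kappa, q_\kappa) = 0 .
\]
Writing $w_\kappa = t_\kappa - \beta_{\kappa-1}w_{\kappa-1}$ and using $(T^*r_\kappa, w_{\kappa-1}) = 0$ from part~(i) of lemma~\ref{alg:rec} leaves $(T^*r_\kappa, t_\kappa) = 0$. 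Inserting the defining relation $T^*r_\kappa = (I+T^*T/\gamma)t_\kappa$ and expanding gives
\[
 0 = (T^*r_\kappa, t_\kappa) = \bigl((I+T^*T/\gamma)t_\kappa, t_\kappa\bigr) = \|t_\kappa\|^2 + \tfrac{1}{\gamma}\|Tt_\kappa\|^2 ,
\]
whence $t_\kappa = 0$ and therefore $T^*r_\kappa = (I+T^*T/\gamma)t_\kappa = 0$, as needed.

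The remaining point is the edge case $\kappa = 0$, which I would treat directly rather than through the recurrence, since there $w_0 = T^*r_0$ is set in the initialisation and no $\beta_{-1}w_{-1}$ term is present: from $q_0 = Tw_0 = 0$ one gets $0 = (r_0, q_0) = (T^*r_0, w_0) = \|T^*r_0\|^2$, so again $T^*r_\kappa = 0$ and, with $x_0^\delta = 0 \in \mathcal{N}(T)^\perp$, the conclusion follows as above. I expect no genuine obstacle here; the only things demanding care are the definiteness step (the strict positivity of $I+T^*T/\gamma$ turning $(T^*r_\kappa,t_\kappa)=0$ into $t_\kappa=0$, and thence into $T^*r_\kappa=0$ by invertibility) and the bookkeeping that separates the $\kappa=0$ initialisation from the generic recurrence.
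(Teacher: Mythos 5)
Your proposal is correct and follows essentially the same route as the paper: derive $(T^*r_\kappa,t_\kappa)=0$ from $q_\kappa=0$ via part (i) of lemma~\ref{alg:rec}, use the positive definiteness of $I+T^*T/\gamma$ to conclude $t_\kappa=0$ and hence $T^*r_\kappa=0$, and then invoke the normal-equation characterisation of the minimum-norm solution together with $x_\kappa^\delta\in\mathcal{Q}_\kappa\subseteq\mathcal{N}(T)^\perp$. The separate treatment of $\kappa=0$ and the closing remark on $y^\delta\in\mathcal{D}(T^+)$ match or harmlessly supplement the paper's argument.
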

\begin{proof}
We first show that $q_\kappa=0$ means $T^*r_\kappa=0$. First assume $\kappa=0$.  Then $0=(r_0,q_0)=(r_0,TT^*r_0)=\|T^*r_0\|^2$, hence $T^*r_0=0$. Now let $\kappa>0$ and assume $(r_\kappa,q_\kappa)=0$. Then, with the help of statement $(i)$ of lemma~\ref{alg:rec}
\begin{eqnarray*}
0 &=& (T^*r_\kappa,w_\kappa)=(T^*r_\kappa,t_\kappa-\beta_{\kappa-1}w_{\kappa-1})=(T^*r_\kappa,t_\kappa) \\
  &=& ((I+T^*T\slash \gamma)t_\kappa,t_\kappa)=(t_\kappa,t_\kappa)+\frac{1}{\gamma}(Tt_\kappa,Tt_\kappa)\\
  &=& \|t_\kappa \|^2 + \frac{1}{\gamma}\|Tt_\kappa\|^2\,.
\end{eqnarray*}
Hence we have $0=(I+T^*T\slash \gamma)t_\kappa=T^*r_\kappa$ in all cases. Since $r_\kappa=y^\delta-Tx_k^\delta$, this means
\[
  T^*Tx_\kappa^\delta=T^*y^\delta, \qquad x_\kappa^\delta \in \mathcal{Q}_\kappa \subseteq \mathcal{N}(T)^\perp,
\]
which characterises the minimum-norm solution, that is $x_\kappa^\delta=x^+$ (cf. Theorem~2.5 and Theorem~2.6 in \cite{EHaNeu96}).
\end{proof}
If the algorithm stops with $q_\kappa=0$, it follows from $(\ref{minprop})$ that $x_m^\delta=x_\kappa^\delta$, $m \geq \kappa$.

Analogous to the description of the Krylov subspace $\mathcal{K}_m$ with the set $\Pi_{m-1}$ of polynomials of degree less than $m$, the shift-and-invert Krylov subspace $\mathcal{Q}_m$ can be described with the help of rational functions as
\[
\mathcal{Q}_m = \left\{ r(T^*T)T^*y^\delta~|~r \in \Pi_{m-1}\slash (1+\cdot \slash \gamma)^{m-1} \right\}\,.
\]
The functional calculus of section~2.3 in \cite{EHaNeu96} applies to these rational functions. The iterates, residui etc. of algorithm~\ref{alg:sine} can be identified with the corresponding rational functions (cf. \cite{EHaNeu96,Hanke95}). The following lemma describes some properties of the rational function $r_m(\lambda)$ that belongs to the residuum $r_m$, i.e.,
the function $r_m(\lambda)$ such that
\begin{equation} \label{resrep}
r_m=y^\delta-Tx_m=r_m(TT^*)y^\delta \qquad \mbox{or} \qquad T^*r_m=r_m(T^*T)T^*y^\delta\,,
\end{equation}
respectively. 
\begin{lemma} \label{interlacing} 
As long as the stopping index $\kappa$ has not been reached, we have  
\begin{equation} \label{rdetail}
 r_m(\lambda)=\frac{p_m(\lambda)}{(1+\lambda \slash \gamma)^{m-1}} 
 \quad \mbox{with} \quad
 p_m(\lambda)=\prod_{j=1}^m 
 \left(
  1-\frac{\lambda}{\lambda_{j,m}}
 \right), \quad m \geq 1\,,
\end{equation}
where $r_m(\lambda)$ is the rational function that describes the residuum $r_m$ in algorithm~\ref{alg:sine}.
The values $\lambda_{j,m-1}$, $j=1,\ldots,m-1$ of $r_{m-1}(\lambda)$ and the values 
$\lambda_{j,m}$, $j=1,\ldots,m$ of $r_m(\lambda)$ are interlacing, real, and positive, that is
\[
 0 < \lambda_{1,m} < \lambda_{1,m-1} < \lambda_{2,m} < \cdots<  \lambda_{m-1,m} < \lambda_{m,m-1} < \lambda_{m,m} \leq \|T\|^2\,.
\]
\end{lemma}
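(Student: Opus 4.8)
The plan is to reduce the statement to the classical theorem on the zeros of orthogonal polynomials, exactly as is done for {\sc CGNE}, but only after a change of spectral variable that removes an $m$-dependence hidden in the problem. First I would fix the representation: by the rational description of $\mathcal{Q}_m$ recalled above, $x_m^\delta=s(T^*T)T^*y^\delta$ for some $s\in\Pi_{m-1}\slash(1+\cdot\slash\gamma)^{m-1}$, and the intertwining relation $Tf(T^*T)=f(TT^*)T$ gives $Tx_m^\delta=s(TT^*)\,TT^*\,y^\delta$. Hence the residual function is $r_m(\lambda)=1-\lambda s(\lambda)$; writing $s(\lambda)=q(\lambda)\slash(1+\lambda\slash\gamma)^{m-1}$ with $q\in\Pi_{m-1}$ yields
\[
 r_m(\lambda)=\frac{(1+\lambda/\gamma)^{m-1}-\lambda q(\lambda)}{(1+\lambda/\gamma)^{m-1}}=\frac{p_m(\lambda)}{(1+\lambda/\gamma)^{m-1}},
\]
where $p_m\in\Pi_m$ and $p_m(0)=1$. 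The minimisation property of $(\ref{minprop})$ is equivalent to $r_m\perp T\mathcal{Q}_m$, which by the functional calculus for the spectral family $\{E_\lambda\}$ of $TT^*$ translates into
\[
 \int_0^{\|T\|^2} p_m(\lambda)\,q(\lambda)\,\mathrm{d}\nu_m(\lambda)=0\quad\mbox{for all }q\in\Pi_{m-1},\qquad \mathrm{d}\nu_m(\lambda)=\frac{\lambda}{(1+\lambda/\gamma)^{2(m-1)}}\,\mathrm{d}\|E_\lambda y^\delta\|^2.
\]

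Here lies the main obstacle. For {\sc CGNE} the analogous measure is independent of $m$, so that $p_{m-1}$ and $p_m$ are consecutive orthogonal polynomials with respect to one and the same measure and the classical interlacing theorem applies verbatim. In the present situation the weight carries the factor $(1+\lambda/\gamma)^{-2(m-1)}$ and therefore changes with $m$, so consecutive $p_m$ are orthogonal with respect to \emph{different} measures $\nu_{m-1}\neq\nu_m$ and the standard theory cannot be invoked directly. The key idea is that all these measures are synchronised by the substitution $u=(1+\lambda/\gamma)^{-1}$, the monotone decreasing bijection of $[0,\|T\|^2]$ onto $[u_{\min},1]$ with $u_{\min}=(1+\|T\|^2/\gamma)^{-1}$ and $\lambda=\gamma(1-u)/u$. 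Setting $\hat p_m(u):=u^m p_m\bigl(\gamma(1-u)/u\bigr)$ one checks that $\hat p_m$ is a polynomial of degree at most $m$ with $\hat p_m(1)=1$, and that the test functions $q(\lambda)$, $q\in\Pi_{m-1}$, correspond under $u\mapsto u^{m-1}q(\gamma(1-u)/u)$ exactly to $\Pi_{m-1}$ in the variable $u$ (this is the Bernstein basis $\{(1-u)^k u^{m-1-k}\}_{k=0}^{m-1}$). Substituting into the integral, all powers of $u$ combine and the $m$-dependent factor cancels, leaving
\[
 \int \hat p_m(u)\,\tilde q(u)\,\mathrm{d}\omega(u)=0\quad\mbox{for all }\tilde q\in\Pi_{m-1},\qquad \mathrm{d}\omega(u)=\frac{1-u}{u^2}\,\mathrm{d}\rho(u),
\]
where $\rho$ is the push-forward of $\|E_\lambda y^\delta\|^2$ under $\lambda\mapsto u$. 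Since $\rho\ge0$ and $1-u\ge0$ on $[u_{\min},1]$, the measure $\omega$ is positive and, crucially, independent of $m$.

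It then remains to run the classical argument for the fixed measure $\omega$. Because the stopping index $\kappa$ has not been reached, $\dim\mathcal{Q}_m=m$ (lemma~\ref{alg:basisprop}), which is equivalent to $\omega$ having at least $m$ points of increase; hence $\hat p_m$ is, up to a nonzero scalar, the $m$-th orthogonal polynomial for $\omega$. In particular $\hat p_m$ has degree exactly $m$ (so $p_m(-\gamma)\neq0$ and $p_m$ is a genuine polynomial of degree $m$ with $m$ zeros), and these zeros are real, simple and contained in the interior of the convex hull of $\mathrm{supp}\,\omega\subset(u_{\min},1)$; the classical interlacing theorem for orthogonal polynomials applied to the consecutive pair $\hat p_{m-1},\hat p_m$ then gives the strict interlacing of their zeros. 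Transporting the zeros back through the decreasing bijection $u\mapsto\gamma(1-u)/u$ reverses their order and maps them into $(0,\|T\|^2)$, which turns the interlacing of the $u$-zeros into the asserted chain $0<\lambda_{1,m}<\lambda_{1,m-1}<\cdots<\lambda_{m,m}\le\|T\|^2$; the normalisation $p_m(0)=1$ finally fixes $p_m(\lambda)=\prod_{j=1}^m(1-\lambda/\lambda_{j,m})$, completing $(\ref{rdetail})$.
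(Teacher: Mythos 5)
Your proof is correct, but it follows a genuinely different route from the paper. The paper never touches orthogonal polynomials: it introduces the Galerkin matrices $S_\ell=\bigl((T^*Tv_i,v_j)\bigr)$ with respect to an orthonormal basis of $\mathcal{Q}_\ell$, observes that $S_{\ell-1}$ is the leading principal submatrix of the symmetric positive definite $S_\ell$, and obtains the interlacing from the Cauchy eigenvalue interlacing theorem; the zeros of $r_m$ are then identified with the eigenvalues of $S_m$ (the Ritz values) by comparing the two representations $z_m=\beta S_m^{-1}e_1$ and $\xi_m=\beta\,q_{m-1}(S_m)(1+S_m/\gamma)^{-(m-1)}e_1$ of the same iterate. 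You instead attack the obstacle head on in the function picture: the residual functions are orthogonal with respect to measures $\nu_m$ that vary with $m$, and your M\"obius substitution $u=(1+\lambda/\gamma)^{-1}$, together with the rescaling $\hat p_m(u)=u^mp_m(\gamma(1-u)/u)$, synchronises all of them into a single positive measure $\omega$, after which the classical zero-interlacing theorem applies and the decreasing change of variable reverses the ordering into the asserted chain. This is a legitimate and rather illuminating alternative: it explains \emph{why} the single-pole shift-and-invert space is special (one fixed M\"obius map removes the $m$-dependence, which would fail for varying poles, where the paper's matrix argument still works), and it stays closer in spirit to the classical {\sc CGNE} analysis. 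The matrix route, by contrast, gets positivity and the bound $\lambda_{m,m}\leq\|T\|^2$ for free from $0<S_m\leq\|T\|^2 I$ and avoids any case distinction on the number of points of increase of the measure. Two small points you should tighten: the identification of $\hat p_m$ with the $m$-th orthogonal polynomial (and the strictness of the interlacing) uses that $\omega$ has at least $m$ points of increase, which you correctly tie to $\dim\mathcal{Q}_m=m$, but in the borderline case of exactly $m$ points of increase $\hat p_m$ is the node polynomial and its zeros need not lie in the \emph{interior} of the convex hull of $\operatorname{supp}\omega$; the conclusion $\lambda_{1,m}>0$ is nevertheless safe because $\hat p_m(1)=p_m(0)=r_m(0)=1\neq0$, so $u=1$ is never a zero — worth stating explicitly rather than appealing to interiority.
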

\begin{proof}
Let $v_0, \cdots, v_{m-1}$ be an orthonormal basis such that
\[
  \mathcal{Q}_\ell = 
  \mbox{span} \{ w_0, \cdots, w_{\ell-1} \}
  = 
  \mbox{span} \{ v_0, \cdots, v_{\ell-1} \}
  \qquad
  \mbox{for} 
  \quad \ell=1,\ldots,m\,.
\]
The basis $v_j$ might be obtained from the basis $w_0,\cdots,w_{m-1}$ by the Gram--Schmidt process (or the Arnoldi process with $T^*y^\delta$). 
Then we can represent the iterate $x_\ell^\delta$ as 
$
   x_\ell^\delta=\sum_{j=0}^{\ell-1} z_{j,\ell} v_j
$.
Since $T^*y^\delta-T^*Tx_\ell^\delta \perp \mathcal{Q}_\ell$ is an equivalent condition to $x_\ell^\delta$ being the minimizer in $(\ref{minprop})$, we have
\[
 S_\ell=\left( (T^*Tv_i,v_j) \right)_{j,i=0}^{\ell-1}, \qquad S_\ell z_\ell = \beta e_1, \qquad \beta=\|T^*y^\delta\|\,.
\]
Since $\mathcal{Q}_\ell \subset \mathcal{N}(T)^\perp=\mathcal{N}(T^*T)^\perp$, $S_\ell$ is invertible, and therefore symmetric positive definite with $\|S_\ell\| \leq \|T\|^2$ and $z_\ell=\beta S_\ell^{-1}e_1$. 
Specifically,
\begin{equation} \label{rlemma:eq1}
x_m^\delta=\sum_{j=0}^{m-1} z_{j,m} v_j, \qquad \mbox{with} \qquad z_m=\beta S_m^{-1} e_1\,.
\end{equation}
Furthermore,
the $(\ell-1,\ell-1)$ submatrix of $S_\ell$ is $S_{\ell-1}$ for $\ell=2,\ldots,m$.
Inductively, by the interlacing eigenvalue theorem (cf. Theorem~3.6 in \cite{Stewartbook01}), this leads to the result 
that the eigenvalues of $S_\ell$ are separated and interlaced with the eigenvalues of $S_{\ell-1}$. If we designate the eigenvalues of $S_\ell$ with $\lambda_{1,\ell} < \cdots < \lambda_{\ell,\ell}$ then we obtain the statement on the interlacing of the numbers in our theorem.
We still have to show that the $\lambda_{j,m}$, $j=1,\ldots,m$ are the zeros of our function $r_m(\lambda)$. 
Using the notation of quasi-matrices in the standard representation of the iterate in the rational Krylov subspace (cf. \cite{GG13,stewart}), one obtains
\begin{equation} \label{rlemma:eq2}
 x_m^\delta=\frac{q_{m-1}(T^*T)}{(1+T^*T\slash \gamma)^{m-1}}T^*y^\delta =\sum_{j=0}^{m-1}\xi_{j,m} v_j,
 \quad
\xi_m=\beta \frac{q_{m-1}(S_m)}{(1+S_m \slash \gamma)^{m-1}} e_1\,.
\end{equation}
By comparing $(\ref{rlemma:eq1})$ with $(\ref{rlemma:eq2})$,  we obtain
\[
  \beta \frac{q_{m-1}(S_m)}{(1+S_m \slash \gamma)^{m-1}} e_1
  =
  \beta S_m^{-1}e_1 
  \qquad
  \mbox{and hence}
  \qquad
  \frac{q_{m-1}(S_m)}{(1+S_m \slash \gamma)^{m-1}}
  =
  S_m^{-1}\,,
\]
since $S_m$ comes from a Krylov process and the minimal polynomial of $S_m$ with respect to $e_1$ has therefore degree $m$. Finally, we obtain
\[
  \frac{q_{m-1}(\lambda_{j,m})}{(1+\lambda_{j,m} \slash \gamma)^{m-1}}
  =\frac{1}{\lambda_{j,m}} \qquad \mbox{for} \qquad j=1,\ldots,m
\]
by spectral decomposition.
This means that $r_m(\lambda)=1-\lambda q_{m-1}(\lambda) \slash (1+ \lambda \slash \gamma)^{m-1}$
has zeros $\lambda_{j,m}$, $j=1,\ldots,m$. Together with the obvious value $r_m(0)=1$, this shows the representation of $r_m(\lambda)$ as given in our lemma.
\end{proof}

The inner product introduced in the following lemma will be crucial for the proof of our main theorem. Also, the idea of algorithm~\ref{alg:sine} can be briefly stated as computing an orthogonal basis $w_0,\ldots w_{j-1}$ of the rational Krylov subspace $\mathcal{Q}_j$ with respect to the inner product $[\cdot,\cdot]$ in $(\ref{rperpQm})$ when the vectors are identified with the corresponding rational functions.
In the following, we will often not make a difference between the residuum $r_m$ and the rational function $r_m(\lambda)$ and denote both by $r_m$.
\begin{lemma} \label{rperpq} 
The rational functions $r_m$ generated by Algorithm~\ref{alg:sine}
are orthogonal to $\Pi_{m-1} \slash (1+ \cdot \slash \gamma)^{m-1}$ with respect to the inner product
\begin{equation} \label{rperpQm}
  \big[ \varphi, \psi \big] = \int_0^{\|T\|^2+} 
      \varphi(\lambda) \psi(\lambda) \lambda \, d\|F_\lambda y^\delta\|^2\,, 
\end{equation}
where $F_\lambda$ designates the spectral family of $TT^*$.
Among all rational $\varphi \in \Pi_m \slash (1+ \cdot \slash \gamma)^{m-1}$ with $\varphi(0)=1$, $r_m$ minimises the functional
\begin{equation} \label{minfunkt}
\Phi[\varphi]= \int_0^{\|T\|^2+} 
      \varphi^2(\lambda) \, d\|F_\lambda y^\delta\|^2\,.
\end{equation}
\end{lemma}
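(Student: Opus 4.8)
The plan is to reduce both assertions to the variational characterisation of $x_m^\delta$ already established in Lemma~\ref{alg:minprop}, using the functional calculus to transport everything onto the spectral side of $TT^*$. First I would record the identity linking the residuum to the functional $\Phi$. Writing a generic element of $\mathcal{Q}_m$ as $x=r(T^*T)T^*y^\delta$ with $r\in\Pi_{m-1}\slash(1+\cdot\slash\gamma)^{m-1}$, the intertwining relation $Tf(T^*T)=f(TT^*)T$ gives
\[
 y^\delta-Tx=\bigl(I-r(TT^*)TT^*\bigr)y^\delta=\varphi(TT^*)y^\delta,\qquad \varphi(\lambda)=1-\lambda r(\lambda).
\]
Hence $\|y^\delta-Tx\|^2=\int_0^{\|T\|^2+}\varphi^2(\lambda)\,d\|F_\lambda y^\delta\|^2=\Phi[\varphi]$, so the residual norm is literally the functional in $(\ref{minfunkt})$.

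Second, I would make precise the correspondence $r\leftrightarrow\varphi$. The map $r\mapsto\varphi=1-\lambda r$ sends $\Pi_{m-1}\slash(1+\cdot\slash\gamma)^{m-1}$ bijectively onto the admissible set $\{\varphi\in\Pi_m\slash(1+\cdot\slash\gamma)^{m-1}:\varphi(0)=1\}$: the numerator degree rises by at most one and $\varphi(0)=1$ holds automatically; conversely any such $\varphi$ has numerator $p$ with $p(0)=1$, so $1-\varphi$ vanishes at $0$ and factors as $\lambda r$ with $r\in\Pi_{m-1}\slash(1+\cdot\slash\gamma)^{m-1}$. By Lemma~\ref{alg:minprop}, $x_m^\delta$ minimises $\|y^\delta-Tx\|$ over $\mathcal{Q}_m$; transported through this bijection and the identity above, the residual rational function $r_m$ (which is exactly the $\varphi$ attached to $x_m^\delta$, in accordance with the representation in Lemma~\ref{interlacing}) minimises $\Phi$ over all admissible $\varphi$. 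This is the second assertion.

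For the orthogonality I would use the first-order optimality condition of this constrained minimisation, equivalently the Galerkin condition $T^*y^\delta-T^*Tx_m^\delta\perp\mathcal{Q}_m$ that characterises the minimiser. The admissible directions are $\psi=\lambda s$ with $s\in\Pi_{m-1}\slash(1+\cdot\slash\gamma)^{m-1}$, precisely the elements of $\Pi_m\slash(1+\cdot\slash\gamma)^{m-1}$ vanishing at $0$, so stationarity of $\Phi$ reads $\int_0^{\|T\|^2+}r_m(\lambda)\lambda s(\lambda)\,d\|F_\lambda y^\delta\|^2=0$, i.e. $[r_m,s]=0$ for every such $s$. Concretely, using $(\ref{resrep})$ together with $g(T^*T)T^*=T^*g(TT^*)$ one computes, for $z=s(T^*T)T^*y^\delta\in\mathcal{Q}_m$,
\[
 (T^*r_m,z)=\bigl(r_m(T^*T)\,s(T^*T)T^*y^\delta,\,T^*y^\delta\bigr)=\bigl(r_m(TT^*)s(TT^*)\,TT^*y^\delta,\,y^\delta\bigr)=[r_m,s],
\]
so the Galerkin condition is verbatim the claimed orthogonality.

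The step I expect to be most delicate is the functional-calculus bookkeeping in this last chain: one must pass consistently between $T^*T$ on $\mathcal{X}$ and $TT^*$ on $\mathcal{Y}$ through the intertwining identities, keep track of the extra factor $\lambda$ that converts the plain spectral measure $d\|F_\lambda y^\delta\|^2$ of $\Phi$ into the weighted inner product $[\cdot,\cdot]$ in $(\ref{rperpQm})$, and confirm that the rational functional calculus of section~2.3 in \cite{EHaNeu96} applies, which it does since the denominators $(1+\lambda\slash\gamma)^{m-1}\geq 1$ stay bounded away from zero on the compact spectrum $[0,\|T\|^2]$. Once these identifications are in place, both assertions are immediate restatements of Lemma~\ref{alg:minprop}.
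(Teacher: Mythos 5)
Your proposal is correct and follows essentially the same route as the paper: both identify $[\varphi,\psi]$ with the Hilbert-space inner product $(\varphi(T^*T)T^*y^\delta,\psi(T^*T)T^*y^\delta)$ via the functional calculus, obtain the orthogonality from the fact that $T^*r_m\perp\mathcal{Q}_m$ (the paper cites lemma~\ref{alg:rec}~(i) directly, you phrase it as the equivalent Galerkin condition), and deduce the minimisation property from lemma~\ref{alg:minprop}. You merely spell out the bijection $r\leftrightarrow 1-\lambda r$ and the identity $\|y^\delta-Tx\|^2=\Phi[1-\lambda r]$, which the paper leaves implicit.
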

\begin{proof}
We have
\[
[\varphi, \psi]=\int_0^{\|T\|^2+} \varphi(\lambda)\psi(\lambda)\lambda \,d\|F_\lambda y^\delta\|^2 
     =(\varphi(T^*T)T^*y^\delta,\psi(T^*T)T^*y^\delta)\,
\]
which gives the first assertion by lemma~\ref{alg:rec} $(i)$. The second assertion follows by lemma~\ref{alg:minprop}.
\end{proof}

\section{Convergence} \label{sec:conv}

The following theorem shows convergence of the iterates $x_m$ in algorithm~\ref{alg:sine} to the minimum-norm solution $x^+=T^+y$ for data $y \in \mathcal{D}(T^+)$ and our general choice $x_0=0$. For an initial guess $x_0 \ne 0$, 
it can be readily shown that the iterates converge to $T^+y + P_{\mathcal{N}(T)}x_0$, where $P_{\mathcal{N}(T)}$ is the orthogonal projector to the null space of $T$. The superscript $\delta$ has been dropped in this section in order to 
emphasise that data $y\in \mathcal{D}(T^+)$ without perturbation is considered.  
\begin{theorem} \label{sineconv} 
The sequence of SINE iterates $\{ x_n \}$ converge to $T^+y$ for all $y \in \mathcal{D}(T^+)$. 
\end{theorem}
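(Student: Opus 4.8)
The plan is to reduce the claim to two facts: that the residuals converge, $\|Tx_m-Tx^+\|\to 0$, and that the iterates stay bounded by the minimum-norm solution, $\|x_m\|\le\|x^+\|$; strong convergence then follows from a soft Hilbert-space argument. I would first establish residual convergence directly from the minimisation property in Lemma~\ref{rperpq}. Since $r_m$ minimises $\Phi[\varphi]=\int_0^{\|T\|^2+}\varphi^2\,d\|F_\lambda y\|^2$ over all $\varphi\in\Pi_m/(1+\cdot/\gamma)^{m-1}$ with $\varphi(0)=1$, and $\varphi_m(\lambda):=(1+\lambda/\gamma)^{-(m-1)}$ is an admissible competitor with $\varphi_m(0)=1$, we obtain
\[
  \|r_m\|^2=\Phi[r_m]\le\Phi[\varphi_m]=\int_0^{\|T\|^2+}\frac{d\|F_\lambda y\|^2}{(1+\lambda/\gamma)^{2(m-1)}}\,.
\]
The integrand is bounded by $1$, equals $1$ at $\lambda=0$, and tends to $0$ for every $\lambda>0$; as $d\|F_\lambda y\|^2$ is a finite measure, dominated convergence gives $\Phi[\varphi_m]\to\|P_{\mathcal N(T^*)}y\|^2$. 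Together with the trivial lower bound $\|r_m\|^2\ge\|P_{\mathcal N(T^*)}r_m\|^2=\|P_{\mathcal N(T^*)}y\|^2$ (because $Tx_m\in\mathcal R(T)\perp\mathcal N(T^*)$), this yields $\|Tx_m-Tx^+\|^2=\|r_m\|^2-\|P_{\mathcal N(T^*)}y\|^2\to0$, i.e. $Tx_m\to Tx^+=P_{\overline{\mathcal R(T)}}y$. For $y\in\mathcal R(T)$ this is simply $Tx_m\to y$.

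Granting the norm bound $\|x_m\|\le\|x^+\|$, the argument is completed as follows. The iterates lie in the closed, hence weakly closed, subspace $\mathcal N(T)^\perp$ and are bounded, so every subsequence has a weakly convergent sub-subsequence with limit $\bar x\in\mathcal N(T)^\perp$. Continuity of $T$ and the residual convergence force $T\bar x=Tx^+$, and since $T$ is injective on $\mathcal N(T)^\perp$ we get $\bar x=x^+$; hence the whole sequence converges weakly, $x_m\rightharpoonup x^+$. Weak lower semicontinuity of the norm then gives $\|x^+\|\le\liminf\|x_m\|\le\limsup\|x_m\|\le\|x^+\|$, so $\|x_m\|\to\|x^+\|$, and weak convergence together with convergence of the norms yields strong convergence $x_m\to x^+$ in $\mathcal X$.

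The main obstacle is the norm bound $\|x_m\|\le\|x^+\|$ (equivalently, boundedness of the exact-data iterates), the shift-and-invert analogue of the classical monotonicity $\|x_m\|\uparrow\|x^+\|$ for {\sc CGNE}. I would prove it from the conjugate-direction structure supplied by Lemma~\ref{alg:rec}: writing $x_m=\sum_{j=0}^{m-1}\alpha_j w_j$, the step sizes are strictly positive, $\alpha_j=(r_j,q_j)/(q_j,q_j)$ with $(r_j,q_j)=\|t_j\|^2+\gamma^{-1}\|Tt_j\|^2>0$ exactly as in the proof of Lemma~\ref{alg:breakdown}, while the images $q_j=Tw_j$ are mutually orthogonal by Lemma~\ref{alg:rec}(iii). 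The delicate point is to control the \emph{plain} inner products $(w_i,w_j)$ of the search directions: for {\sc CGNE} these are non-negative, which makes $\|x_m\|^2=\sum_{i,j}\alpha_i\alpha_j(w_i,w_j)$ increase monotonically and forces $\|x_m\|\le\|x^+\|$. Here the simple three-term recurrences are replaced by the shift-and-invert ones $w_{j+1}=t_{j+1}-\beta_j w_j$, $t_{j+1}=(I+T^*T/\gamma)^{-1}T^*r_{j+1}$, and one must re-derive the corresponding sign relations for the coefficients $\beta_j$.

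The same difficulty can be read off on the spectral side. Writing $e_m=x^+-x_m$, the residual representation $(\ref{resrep})$ gives $e_m=r_m(T^*T)x^+$, whence $\|e_m\|^2=\int_0^{\|T\|^2+}r_m(\lambda)^2\,d\|E_\lambda x^+\|^2$, where $E_\lambda$ is the spectral family of $T^*T$ and the measure carries no mass at $\lambda=0$ because $x^+\in\mathcal N(T^*T)^\perp$. Splitting the integral at a level $\lambda_0>0$, the tail $\int_{\lambda_0}^{\|T\|^2+}$ is dominated by $\lambda_0^{-1}$ times the already controlled weighted quantity $\int r_m^2\lambda\,d\|E_\lambda x^+\|^2=\|Tx_m-Tx^+\|^2\to0$, while on the head one uses $0\le r_m(\lambda)\le1$ for $\lambda\in[0,\lambda_{1,m}]$, immediate from the product representation of Lemma~\ref{interlacing} since all zeros lie above $\lambda_{1,m}$ and the denominator is at least $1$. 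The obstruction is precisely the regime in which the smallest Ritz value $\lambda_{1,m}$ tends to $0$, so that this pointwise bound no longer covers a fixed interval $[0,\lambda_0]$; controlling $r_m$ on the remaining sliver near the origin is the crux, and it is exactly what the monotone norm bound circumvents.
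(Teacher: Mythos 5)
Your reduction is attractive and its first step is sound: taking the admissible competitor $(1+\lambda/\gamma)^{-(m-1)}$ in the minimisation property of lemma~\ref{rperpq} and applying dominated convergence does give $\|P_{\overline{\mathcal R(T)}}y-Tx_m\|\to 0$, and the weak-compactness upgrade to strong convergence is standard \emph{once the iterates are known to be bounded}. But that boundedness --- your norm bound $\|x_m\|\le\|x^+\|$, or any uniform bound on $\|x_m\|$ --- is exactly the load-bearing step, and you leave it unproven. The CG-style route you sketch does not obviously transfer: for CGNE the sign information comes for free from $\beta_j=\|T^*r_{j+1}\|^2/\|T^*r_j\|^2>0$, whereas for SINE $\beta_j=(t_{j+1},s_j)/\delta_j$ with $t_{j+1}=(I+T^*T/\gamma)^{-1}T^*r_{j+1}$ and $s_j=T^*q_j$, and nothing in lemma~\ref{alg:rec} supplies a sign for this quantity or for the plain inner products $(w_i,w_j)$. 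Your alternative spectral route stalls at the point you yourself identify: the interval $[0,\lambda_{1,m}]$ on which $0\le r_m\le 1$ may shrink to the origin, and then the head integral $\int_0^{\lambda_0}r_m^2\,d\|E_\lambda x^+\|^2$ is not controlled by a fixed $\lambda_0$. As it stands the proposal is a correct conditional argument whose hypothesis is the whole difficulty.

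The paper closes precisely this gap, and does so without ever bounding $\|x_m\|$. It introduces the weighted residual function $\varphi_m(\lambda)=r_m(\lambda)\bigl(\lambda_{1,m}/(\lambda_{1,m}-\lambda)\bigr)^{1/2}$ on $[0,\lambda_{1,m}]$, shows via the orthogonality relation $(\ref{rperpQm})$ applied to $r_m/(\lambda-\lambda_{1,m})$ that $\|y-Tx_m\|\le\|F_{\lambda_{1,m}}\varphi_m(TT^*)y\|$, and then proves the quantitative bound $\sup_{0\le\lambda\le\lambda_{1,m}}\lambda^\nu\varphi_m^2(\lambda)\le c^\nu\nu^\nu|r_m'(0)|^{-\nu}$ by locating the interior maximiser of $\lambda^\nu\varphi_m^2$ through its logarithmic derivative and the zero/pole data of lemma~\ref{interlacing}; the extra factor $(\lambda_{1,m}-\lambda)^{-1/2}$ in $\varphi_m$ is what tames the sliver near the origin. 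With $\nu=1$ this yields $\|P_{\overline{\mathcal R(T)}}y-Tx_m\|\le c^{1/2}|r_m'(0)|^{-1/2}\|E_{\lambda_{1,m}}x^+\|$, i.e.\ not only a rate in terms of $|r_m'(0)|=\sum_j\lambda_{j,m}^{-1}+(m-1)/\gamma\to\infty$ but, crucially, the extra factor $\|E_{\lambda_{1,m}}x^+\|$; the splitting $\|x^+-x_m\|\le\|E_\epsilon x^+\|+\epsilon^{-1/2}\|P_{\overline{\mathcal R(T)}}y-Tx_m\|$ is then closed by choosing $\epsilon=\lambda_{1,m}$ when $\lambda_{1,m}\to 0$ (using $|r_m'(0)|\ge\lambda_{1,m}^{-1}$) and $\epsilon=|r_m'(0)|^{-1/2}$ otherwise. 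To repair your argument you must either prove a uniform bound on $\|x_m\|$ for exact data --- which does not follow from the recursion coefficients in any way visible to me --- or replace it by an estimate of the above type, which is in effect the paper's proof.
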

\begin{proof}
We basically follow the lines of the proof of theorem~7.9 in \cite{EHaNeu96} or \cite{NemPoI}, respectively.
If the iteration terminates after a finite number of steps then the corresponding iterate coincides with $T^+y$ according to lemma~\ref{alg:breakdown}. 
We therefore assume, that the iteration does not terminate. Then we have the ordering
\[
0 < \lambda_{1,m} < \lambda_{2,m} < \cdots < \lambda_{m,m} \leq \|T\|^2 
\]
of the Ritz values according to lemma~\ref{interlacing}. 
From the representation of the residual rational function $(\ref{rdetail})$ in lemma~\ref{interlacing},
we obtain
\begin{equation} \label{rd0}
 |r'_m(0)|=-r_m'(0)=\sum_{j=1}^m \frac{1}{\lambda_{j,m}}+\frac{m-1}{\gamma}
\end{equation}
by the same calculations that lead to $(\ref{rmdiffhelp})$ below.
Since $r_m\slash (\lambda-\lambda_{1,m})$ is in the space $\Pi_{m-1}\slash(1+\cdot \slash \gamma)^{m-1}$, the 
orthogonality relation $(\ref{rperpQm})$ yields
\[
0=\int_0^{\|T\|^2+} r_m(\lambda)\frac{r_m(\lambda)}{\lambda-\lambda_{1,m}} \lambda\,d\|F_\lambda y\|^2\, 
\]
which gives
\[
\int_0^{\lambda_{1,m}} r_m^2(\lambda)\frac{\lambda}{\lambda_{1,m}-\lambda}\,d\|F_\lambda y\|^2
=
\int_{\lambda_{1,m}}^{\|T\|^2+} r_m^2(\lambda)\frac{\lambda}{\lambda-\lambda_{1,m}}\,d\|F_\lambda y\|^2\,.
\]
Since $\lambda \slash (\lambda-\lambda_{1,m}) \geq 1$ for $\lambda \geq \lambda_{1,m}$ we obtain 
\[
\int_0^{\lambda_{1,m}} r_m^2(\lambda)\frac{\lambda}{\lambda_{1,m}-\lambda}\,d\|F_\lambda y\|^2 
\geq
\int_{\lambda_{1,m}}^{\|T\|^2+} r_m^2(\lambda)\,d\|F_\lambda y\|^2\,.
\]
And therefore,
\begin{eqnarray*}
\|y-Tx_m\|^2 
    &= 
    \int_0^{\lambda_{1,m}} r_m^2(\lambda)\,d\|F_\lambda y\|^2 
    +
    \int_{\lambda_{1,m}}^{\|T\|^2+} r_m^2(\lambda)\,d\|F_\lambda y\|^2 \\
    &\leq 
    \int_0^{\lambda_{1,m}} r_m^2(\lambda) 
     \Big( 1+\frac{\lambda}{\lambda_{1,m}-\lambda} \Big) \, d\|F_\lambda y\|^2\,.
\end{eqnarray*}
Defining
\[
\varphi_m(\lambda) := r_m(\lambda)
      \Big(
       \frac{\lambda_{1,m}}{\lambda_{1,m}-\lambda}
      \Big)^{\frac{1}{2}}, \qquad 0 \leq \lambda \leq \lambda_{1,m}\,,
\]
we obtain the estimate
\begin{equation} \label{resbound}
  \|y-Tx_m\| \leq \|F_{\lambda_{1,m}}\varphi_m(TT^*)y\| 
  \leq \max_{0 \leq \lambda \leq \lambda_{1,m}}
   \sqrt{\lambda \varphi_m^2(\lambda)} \quad \|E_{\lambda_{1,m}} x^+ \| \,,
\end{equation}
where $E_\lambda$ designates the spectral family of $T^*T$. For the last inequality, we additionally assumed 
$y \in \mathcal{R}(T)$, that is, $y=Tx^+$.
It follows immediately, that $ 0 \leq \varphi_m(\lambda) \leq 1$ for $\lambda \in [0, \lambda_{m,1}]$.
We further calculate
\begin{eqnarray*}
  \frac{d}{d\lambda} \varphi_m^2(\lambda) 
  &=
  2\varphi_m(\lambda)\varphi_m'(\lambda) \\
  &=
  2 \frac{p_m(\lambda)}{(1+\lambda \slash \gamma)^{m-1}}
  \left(
   \frac{\lambda_{m,1}}{\lambda_{m,1}-\lambda}
  \right)
  \left[
    \left(
        \frac{p_m(\lambda)}{(1+\lambda \slash \gamma)^{m-1}}
    \right)'
   \right. \\
  & \qquad \qquad \qquad \qquad  +
  \left.
   \frac{1}{2} \cdot
    \left(
        \frac{p_m(\lambda)}{(1+\lambda \slash \gamma)^{m-1}}
    \right)
    \cdot
    \frac{1}{(\lambda_{m,1}-\lambda)}
  \right]
\end{eqnarray*}
By inserting in
\begin{eqnarray*}
  \left(
     \frac{p_m(\lambda)}{(1+\lambda \slash \gamma)^{m-1}}
  \right)'
  &=
  \frac{p_m'(\lambda)}{(1+\lambda \slash \gamma)^{m-1}}
  -
  p_m(\lambda) \cdot \frac{m-1}{\gamma} \cdot
  \frac{1}{(1+\lambda \slash \gamma)^m}
\end{eqnarray*}
the relation
\[
  p_m'(\lambda) = -p_m(\lambda) \cdot \sum_{j=1}^m \frac{1}{\lambda_{j,m}-\lambda}\,,
\]
we obtain
\begin{eqnarray} \label{rmdiffhelp}
  r_m'(\lambda) 
  &=
  -\frac{p_m(\lambda)}{(1+\lambda \slash \gamma)^{m-1}}
  \left[
  \sum_{j=1}^m \frac{1}{\lambda_{j,m}-\lambda} +
  \frac{m-1}{\gamma} \cdot \frac{1}{1+\lambda \slash \gamma}
  \right] 
\end{eqnarray}
and therefore
\begin{eqnarray*}
  \frac{d}{d\lambda} \varphi_m^2(\lambda) 
  &=
  2\varphi_m(\lambda)\varphi_m'(\lambda) \\
  &=
  \left(
  \frac{p_m(\lambda)}{(1+\lambda \slash \gamma)^{m-1}}
  \right)^2
  \left(
   \frac{\lambda_{m,1}}{\lambda_{m,1}-\lambda}
  \right)
  \left[
   -2\sum_{j=1}^m \frac{1}{\lambda_{j,m}-\lambda}
   \right. \\
   & \qquad \qquad \qquad \qquad \qquad \quad
   \left. 
   -2\cdot \frac{m-1}{\gamma} \cdot \frac{1}{1+\lambda \slash \gamma} 
   + \frac{1}{\lambda_{1,m}-\lambda}
  \right]\,.
\end{eqnarray*}
Altogether, with $\nu > 0$, 
\begin{eqnarray*}
  &\frac{d}{d\lambda} \lambda^\nu \varphi^2_m(\lambda) \\
  &=
  \nu \lambda^{\nu-1} \varphi^2_m(\lambda)+\lambda^\nu \varphi^2_m(\lambda)
  \left(
    \frac{1}{\lambda_{1,m}-\lambda}-\sum_{j=1}^m \frac{2}{\lambda_{j,m}-\lambda}     -2 \cdot \frac{m-1}{\gamma} \cdot \frac{1}{1+\lambda \slash \gamma}
  \right) \\
  &=
  \lambda^{\nu-1}\varphi^2_m(\lambda) \cdot
  \left[ 
    \nu + \lambda
  \left(
    \frac{1}{\lambda_{1,m}-\lambda}-\sum_{j=1}^m \frac{2}{\lambda_{j,m}-\lambda}     -2 \cdot \frac{m-1}{\gamma} \cdot \frac{1}{1+\lambda \slash \gamma}
  \right) 
  \right].
\end{eqnarray*}
Since $0^\nu\varphi^2_m(0)=0=\lambda_{1,m}^\nu\varphi^2_m(\lambda_{m,1})$, there is at least one $0<\lambda^*<\lambda_{m,1}$, such that
$(\lambda^\nu \varphi^2_m(\lambda))'(\lambda^*)=0$ and such that the maximum is achieved at this point. Hence the equation
\begin{eqnarray}
 \nu &= 
   \lambda^* 
   \left(
    \sum_{j=1}^m \frac{2}{\lambda_{j,m}-\lambda^*} - \frac{1}{\lambda_{1,m}-\lambda^*} + 2\cdot \frac{m-1}{\gamma}\cdot \frac{1}{1+\lambda^* \slash \gamma}
   \right) \label{maxstellenconf} 
\end{eqnarray}
holds true. 
We need to distinguish two cases. First case: $\gamma \geq \|T\|^2$. Then, we have $0 < \lambda^* < \lambda_{1,m} \leq \|T\|^2 \leq \gamma$. Since $\lambda^* < \gamma$, we have
\[
  2\cdot \frac{m-1}{\gamma}\cdot \frac{1}{1+\lambda^*\slash \gamma} 
  \geq 
  \frac{m-1}{\gamma}
\]
and hence
\begin{eqnarray*}
 \nu &= 
   \lambda^* 
   \left(
     \sum_{j=1}^m \frac{2}{\lambda_{j,m}-\lambda^*}
     -\frac{1}{\lambda_{1,m}-\lambda^*}
     +2 \cdot \frac{m-1}{\gamma}\cdot\frac{1}{1+\lambda^*\slash \gamma}
   \right) \\
   &\geq
   \lambda^* 
   \left(
     \sum_{j=1}^m \frac{1}{\lambda_{j,m}-\lambda^*}
     +\frac{m-1}{\gamma}
   \right) 
   \geq
   \lambda^* 
   \left(
     \sum_{j=1}^m \frac{1}{\lambda_{j,m}}
     +\frac{m-1}{\gamma}
   \right) 
   \\
   &=
   \lambda^* \left( -r_m'(0) \right)
\end{eqnarray*}
and therefore
\[
  \lambda^* \leq \frac{\nu}{-r_m'(0)}=\frac{\nu}{|r_m'(0)|}\,.
\]
Second case: $\gamma < \|T\|^2$. We set 
\[
 p=\frac{1}{2} \left( \frac{\|T\|^2}{\gamma} + 1 \right) > 1\,.
\]
We then have 
\[
  2\cdot \frac{m-1}{\gamma} \cdot \frac{1}{1+\lambda^*\slash \gamma} \geq 
  \frac{1}{p} \cdot \frac{m-1}{\gamma}\,.
\]
We can therefore conclude
\begin{eqnarray*}
 \nu &= 
   \lambda^* 
   \left(
     \sum_{j=1}^m \frac{2}{\lambda_{j,m}-\lambda^*} 
     -\frac{1}{\lambda_{1,m}-\lambda^*} 
     + 2\cdot\frac{m-1}{\gamma}\cdot \frac{1}{1+\lambda^*\slash \gamma} 
   \right) \\
  & \geq
   \lambda^*
   \left(
    \sum_{j=1}^m \frac{1}{\lambda_{j,m}-\lambda^*} 
   +\frac{1}{p} \cdot \frac{m-1}{\gamma} 
   \right) \\
  & \geq
   \lambda^*
   \cdot \frac{1}{p} \cdot
   \left(
    \sum_{j=1}^m \frac{1}{\lambda_{j,m}-\lambda^*} 
   +\frac{m-1}{\gamma} 
   \right) 
   \geq
   \lambda^*
   \cdot \frac{1}{p} \cdot
   \left(
    \sum_{j=1}^m \frac{1}{\lambda_{j,m}} 
   +\frac{m-1}{\gamma} 
   \right) 
\\
  &=
   \lambda^* \cdot \frac{1}{p} \cdot \left( -r_m'(0) \right) 
\end{eqnarray*}
hence we have
\[
  \lambda^* \leq \frac{\nu p}{-r_m'(0)}=\frac{\nu p}{|r_m'(0)|}
  \quad \mbox{with} \qquad
  p=\frac{1}{2} \left( \frac{\|T\|^2}{\gamma} +1\right)\,.
\]
In both cases, we have
\[
  \lambda^* \leq c \cdot \frac{\nu}{|r_m'(0)|}, \qquad c=\max \{ 1,p \}\,.
\]
Hence
\begin{equation} \label{basephiest}
  \sup_{0 \leq \lambda \leq \lambda_{1,m}} \lambda^\nu \varphi_m^2(\lambda) 
  \leq (\lambda^*)^\nu \varphi^2_m(\lambda^*) \leq (\lambda^*)^\nu 
  \leq c^\nu \nu^\nu |r_m'(0)|^{-\nu}\,, \quad \nu > 0\,.
\end{equation}
We now relax the assumption on $y$ to $y \in \mathcal{D}(T^+)=\mathcal{R}(T) \oplus \mathcal{R}(T)^\perp$.
Since $\mathcal{R}(T)^\perp=\mathcal{N}(T^*)$, algorithm~\ref{alg:sine} produces the same iterates for $y \in \mathcal{R}(T) \oplus \mathcal{R}(T)^\perp$ and $P_{\overline{R(T)}}y\in \mathcal{R}(T)$, respectively. Rewriting $P_{\overline{R(T)}}y=Tx^+$ with $x^+=T^+y$, we can apply (\ref{basephiest}) with $\nu=1$ and obtain\[
 \|P_{\overline{R(T)}}y-Tx_m \|^2 \leq \|F_{\lambda_{1,m}}\varphi_m(TT^*)Tx^+\|^2 \leq c|r_m'(0)|^{-1} \|E_{\lambda_{1,m}}x^+\|^2\,.
\]
We now fix $0 < \epsilon \leq \lambda_{1,m}$ and obtain the estimates
\begin{eqnarray*}
  \|x^+-x_m\| 
    &= 
    \|r_m(T^*T)x^+\| \leq \|E_\epsilon r_m(T^*T)x^+\| 
      + \|(I-E_\epsilon)r_m(T^*T)x^+\| \\
    &\leq
     \|E_\epsilon r_m(T^*T)x^+\| 
    + \epsilon^{-\frac{1}{2}} \|(I-F_\epsilon)r_m(TT^*)P_{\overline{R(T)}}y\| \\
    &\leq 
     \|E_\epsilon x^+\|
    +\epsilon^{-\frac{1}{2}} \|P_{\overline{R(T)}}y-Tx_m\|\\
    &\leq
      \|E_\epsilon x^+\|+
      \left(
         \frac{c|r_m(0)|^{-1}}{\epsilon}
      \right)^{\frac{1}{2}}
      \|E_{\lambda_{1,m}}x^+\|\,.
\end{eqnarray*}
We have to consider two cases. If $\lambda_{1,m} \rightarrow 0$ as $m \rightarrow \infty$, then one can choose $\epsilon=\epsilon_m=\lambda_{1,m}$. Since $|r_m'(0)| \geq \lambda_{1,m}^{-1}$, we obtain
\[
 \|x^+-x_m\| \leq (1+\sqrt{c})\|E_{\lambda_{1,m}}x^+\| \rightarrow 0 \qquad \mbox{as} \qquad m \rightarrow \infty\,.
\]
In the second case, if $\lambda_{1,m} \rightarrow \lambda_1 > 0$ as $m \rightarrow \infty$, then we choose $\epsilon_m=|r_m'(0)|^{-\frac{1}{2}}$. Since $|r_m'(0)| \geq m\|T\|^{-2}$, $\epsilon_m \rightarrow 0$ as $m \rightarrow \infty$ and hence, $\epsilon_m < \lambda_{1,m}$ for $m$ sufficiently large. In this case we obtain
\[
  \|x^+-x_m\| \leq \|E_{\epsilon_m}x^+\| 
  + \left( c\epsilon_m \right)^{\frac{1}{2}}\|x^+\| \rightarrow 0 
  \qquad \mbox{as} \qquad
  m \rightarrow \infty\,.
\] 
Consequently, in any case, we have $\|x^+-x_m\| \rightarrow 0$ as $m \rightarrow \infty$.
\end{proof}

\section{{\sc SINE} is an order-optimal regularisation method} \label{sec:sine}
We assume that 
\[
  y \in \mathcal{R}(T), \qquad \|y^\delta-y\| \leq \delta\,,
\]
where the noise level $\delta > 0$ is known.  
Algorithm~\ref{alg:sine} is stopped with $m=m(\delta,y^\delta)$ according to the discrepancy
principle $(\ref{discprin})$. For the stopping index $m=m(\delta,y^\delta)\geq 1$,
\begin{equation} \label{discrep}
 \|y^\delta-Tx_{m(\delta,y^\delta)}^\delta\| \leq \tau \delta < \|y^\delta-Tx_{m(\delta,y^\delta)-1}^\delta\|
\end{equation}
is satisfied (with an a-priori chosen $\tau > 1$), for $m=0$, only the first inequality holds true. The algorithm always terminates after a finite number of steps, which can be seen as follows. Lemma~\ref{ressetlem} also holds for $\mu=0$ and $\rho=\|x^+\|$. Hence
\[
  \lim_{m\rightarrow \infty} \|y^\delta-Tx_m\| \leq \delta + \lim_{m\rightarrow \infty} c|r_m'(0)|^{-\frac{1}{2}}\|x^+\|=\delta\,,
\]
since $|r_m'(0)|^{-\frac{1}{2}} \rightarrow 0$, what we already know. 
The limit of the norm of the residuals exists, since the sequence is non-increasing due to lemma~\ref{alg:minprop} or $(\ref{minprop})$, respectively,  and bounded from below by zero. 
Since $\tau \delta > \delta$, the discrepancy principle stops the algorithm after a finite number of steps. 
If the algorithm has a finite termination index $\kappa$, then $q_\kappa=0$. According to lemma~\ref{alg:breakdown} we have $x_\kappa^\delta=T^+y^\delta$, in which case
\[
  \|y^\delta-Tx_\kappa^\delta\|=\|(I-P_{\overline{R(T)}})y^\delta\|
   =\|(I-P_{\overline{R(T)}})(y^\delta-y)\| \leq \delta
\]
and therefore $m(\delta,y^\delta) \leq \kappa$.

The letter $c$ designates a generic constant in the following lemmata and proofs.
\begin{lemma} \label{ressetlem}
Let $y=Tx^+$ with $x^+ \in \mathcal{X}_{\mu,\rho}$. Then for $0<m \leq \kappa$,
\[
  \|y^\delta - Tx_m^\delta \| \leq \delta +c|r_m'(0)|^{-\mu-\frac{1}{2}}\rho\,.
\]
\end{lemma}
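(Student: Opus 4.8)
The plan is to retrace the exact-data argument in the proof of Theorem~\ref{sineconv}, but to keep the noisy datum $y^\delta$ inside every spectral estimate and to split off the perturbation only at the very end. First I would reproduce the opening of that proof with $y^\delta$ in place of $y$. As long as $\kappa$ has not been reached, Lemma~\ref{interlacing} supplies the ordered positive Ritz values $0<\lambda_{1,m}<\cdots<\lambda_{m,m}\le\|T\|^2$ and the representation~(\ref{rdetail}) of the residual function $r_m$ attached to $x_m^\delta$. Since $r_m(\lambda)/(\lambda-\lambda_{1,m})$ lies in $\Pi_{m-1}/(1+\cdot/\gamma)^{m-1}$, the orthogonality relation of Lemma~\ref{rperpq} (with respect to the measure $d\|F_\lambda y^\delta\|^2$) gives, by the same splitting of the integral at $\lambda_{1,m}$ and the estimate $\lambda/(\lambda-\lambda_{1,m})\ge 1$ for $\lambda\ge\lambda_{1,m}$ used there,
\[
  \|y^\delta-Tx_m^\delta\|\le\|F_{\lambda_{1,m}}\varphi_m(TT^*)y^\delta\|,
\]
with the same cut-off function $\varphi_m(\lambda)=r_m(\lambda)\bigl(\lambda_{1,m}/(\lambda_{1,m}-\lambda)\bigr)^{1/2}$ on $[0,\lambda_{1,m}]$.

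Next I would write $y^\delta=y+(y^\delta-y)$ and apply the triangle inequality on the right. For the perturbation part $\|F_{\lambda_{1,m}}\varphi_m(TT^*)(y^\delta-y)\|$ I would invoke the bound $0\le\varphi_m(\lambda)\le 1$ on $[0,\lambda_{1,m}]$ already recorded after~(\ref{resbound}); together with $\|F_{\lambda_{1,m}}\|\le1$ and $\|y^\delta-y\|\le\delta$ this contributes exactly $\delta$. For the smooth part I would use $y=Tx^+$ with $x^+=(T^*T)^\mu w$, $\|w\|\le\rho$, and pass from the spectral family $F_\lambda$ of $TT^*$ to the spectral family $E_\lambda$ of $T^*T$ through $d\|F_\lambda Tx^+\|^2=\lambda\,d\|E_\lambda x^+\|^2$ and $d\|E_\lambda x^+\|^2=\lambda^{2\mu}\,d\|E_\lambda w\|^2$. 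This turns $\|F_{\lambda_{1,m}}\varphi_m(TT^*)y\|^2$ into $\int_0^{\lambda_{1,m}}\lambda^{2\mu+1}\varphi_m^2(\lambda)\,d\|E_\lambda w\|^2\le\rho^2\max_{0\le\lambda\le\lambda_{1,m}}\lambda^{2\mu+1}\varphi_m^2(\lambda)$, and applying the key estimate~(\ref{basephiest}) with $\nu=2\mu+1$ bounds the maximum by $c^{2\mu+1}(2\mu+1)^{2\mu+1}|r_m'(0)|^{-(2\mu+1)}$. Taking square roots and absorbing the $\mu$-dependent factors into the generic constant $c$, the smooth part is at most $c|r_m'(0)|^{-\mu-1/2}\rho$, and adding the two contributions gives the claim; the case $\mu=0$, $\rho=\|x^+\|$ needed just before the lemma is simply $\nu=1$.

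The one point that needs care, and the only genuinely new feature compared with Theorem~\ref{sineconv}, is the bookkeeping: the Ritz values $\lambda_{j,m}$, the residual function $r_m$, and hence $|r_m'(0)|$ all refer to Algorithm~\ref{alg:sine} driven by the noisy datum $y^\delta$. I expect the main thing to verify is that the two ingredients borrowed from the convergence proof, namely~(\ref{basephiest}) and the monotonicity $0\le\varphi_m\le1$, are purely spectral statements depending on $r_m$ only through its zeros, so that they hold verbatim for the perturbed iteration and need no re-derivation. The delicate step is therefore to keep $y^\delta$ (not $y$) inside the orthogonality and cut-off estimate and to introduce the exact datum $y=Tx^+$ only afterwards; it is precisely this ordering that produces the clean separation into the noise level $\delta$ plus a pure smoothness term governed by $|r_m'(0)|$.
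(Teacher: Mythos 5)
Your proposal is correct and follows essentially the same route as the paper: apply the bound (\ref{resbound}) with the noisy datum $y^\delta$, split $y^\delta=y+(y^\delta-y)$, bound the noise term by $\delta$ via $0\le\varphi_m\le1$, and bound the smooth term by passing to the spectral family of $T^*T$ and invoking (\ref{basephiest}) with $\nu=2\mu+1$. Your closing remark that these two ingredients are purely spectral and hence valid verbatim for the perturbed iteration is exactly the (implicit) justification the paper relies on.
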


\begin{proof}
The bound~$(\ref{resbound})$ proved in theorem~\ref{sineconv} reads
\[
  \|y^\delta-Tx_m^\delta\| \leq \|F_{\lambda_{1,m}}\varphi_m(TT^*)y^\delta\|\,.
\]
As before $\varphi_m$ is bounded by $1$ in $[0,\lambda_{1,m}]$ and satisfies
the equation~$(\ref{basephiest})$ with $\nu=2\mu+1$ 
\[
 \lambda^{2\mu+1} \varphi_m^2(\lambda) \leq c^{2\mu+1}(2\mu+1)^{2\mu+1}|r'_m(0)|^{-2\mu-1},\qquad 0\leq\lambda\leq \lambda_{1,m}\,.
\]
If we insert these estimates and use $y=T(T^*T)^\mu w$ with $\|w\| \leq \rho$, we obtain
\begin{eqnarray*}
  \|y^\delta-T x_m^\delta\| 
  &\leq
  \|F_{\lambda_{1,m}}\varphi_m(TT^*)(y^\delta-y)\|
     +\|F_{\lambda_{1,m}}\varphi_m(TT^*)y\| \\
  &\leq \delta + \|E_{\lambda_{1,m}}\varphi_m(T^*T)(T^*T)^{\mu+\frac{1}{2}}w\| \\
  &\leq \delta + c^{\mu+\frac{1}{2}}(2\mu+1)^{\mu+\frac{1}{2}}|r'_m(0)|^{-\mu-\frac{1}{2}}\rho\,,
\end{eqnarray*}
which gives the assertion.
\end{proof}
The following lemma and its proof are a nearly literal copy of Lemma~7.11 in \cite{EHaNeu96}, only that the functions representing the iteration are rational instead of polynomial. 

\begin{lemma} \label{errlem}
Assume that $y=Tx^+$ with $x^+ \in \mathcal{X}_{\mu,\rho}$. Then 
for $0 \leq m \leq \kappa$,
\[
  \|x_m^\delta-x^+\| \leq c(\rho^{\frac{1}{2\mu+1}}\delta_m^{\frac{2\mu}{2\mu+1}}+\sqrt{|r_m'(0)|}\delta_m)\,,
\]
where
\[
  \delta_m := \max\{ \|y^\delta-Tx_m^\delta \|,\delta\}\,.
\]
\end{lemma}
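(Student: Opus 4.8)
The plan is to follow the proof of Lemma~7.11 in \cite{EHaNeu96}, writing the error $z:=x^+-x_m^\delta$ through the residual rational function and splitting the spectrum of $T^*T$ at a cut-off $\epsilon$. First I would record the algebraic decomposition of the error. By $(\ref{rlemma:eq2})$ the iterate has the form $x_m^\delta=s_m(T^*T)T^*y^\delta$ with $s_m(\lambda)=q_{m-1}(\lambda)/(1+\lambda/\gamma)^{m-1}$ and $1-\lambda s_m(\lambda)=r_m(\lambda)$; since $y=Tx^+$ and $x^+\in\mathcal{N}(T)^\perp$, functional calculus gives
\[
  z = x^+-x_m^\delta = r_m(T^*T)x^+ - s_m(T^*T)T^*(y^\delta-y)\,.
\]

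Writing $E_\lambda$ for the spectral family of $A:=T^*T$ and fixing $0<\epsilon\le\lambda_{1,m}$, I would estimate $\|z\|\le\|E_\epsilon z\|+\|(I-E_\epsilon)z\|$ term by term. For the high frequencies the key point is that the discrepancy principle controls $\|A^{1/2}z\|$: since $Tz=y-Tx_m^\delta$ and $z\in\mathcal{N}(T)^\perp$, one has $\|A^{1/2}z\|=\|Tz\|\le\|y^\delta-y\|+\|y^\delta-Tx_m^\delta\|\le 2\delta_m$, whence $\|(I-E_\epsilon)z\|\le\epsilon^{-1/2}\|A^{1/2}z\|\le 2\epsilon^{-1/2}\delta_m$. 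This single bound absorbs both the noise and the unresolved smooth part in the high-frequency band. For the low frequencies I would treat the two pieces separately. On $[0,\lambda_{1,m}]$ one has $0\le r_m\le 1$ by $(\ref{rdetail})$, so with $x^+=(T^*T)^\mu w$, $\|w\|\le\rho$, the approximation part obeys $\|E_\epsilon r_m(T^*T)x^+\|\le\epsilon^\mu\rho$.

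The delicate piece is the low-frequency noise propagation $\|E_\epsilon s_m(T^*T)T^*(y^\delta-y)\|$, which by functional calculus is at most $\delta\sup_{0\le\lambda\le\epsilon}\sqrt{\lambda}\,|s_m(\lambda)|=\delta\sup_{0\le\lambda\le\epsilon}(1-r_m(\lambda))/\sqrt{\lambda}$; I expect this to be the main obstacle. The clean bound $\le|r_m'(0)|\sqrt{\epsilon}\,\delta$ that the estimate needs rests on the inequality $1-r_m(\lambda)\le|r_m'(0)|\lambda$ on $[0,\lambda_{1,m}]$, i.e.\ on $r_m$ lying above its tangent at the origin. This is a convexity statement for $r_m$ on $[0,\lambda_{1,m}]$, and it is exactly here that the structure of the rational residual function must be exploited: logarithmic differentiation of $r_m=p_m/(1+\cdot/\gamma)^{m-1}$ in $(\ref{rdetail})$ gives $r_m''=r_m(H^2-H')$ with $H(\lambda)=\sum_{j}(\lambda_{j,m}-\lambda)^{-1}+\tfrac{m-1}{\gamma(1+\lambda/\gamma)}>0$, and a Cauchy--Schwarz-type inequality $H^2\ge H'$ shows $r_m''\ge 0$ on $[0,\lambda_{1,m})$.

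Finally I would collect the three contributions,
\[
  \|z\| \le \epsilon^\mu\rho + |r_m'(0)|\sqrt{\epsilon}\,\delta + 2\epsilon^{-1/2}\delta_m\,,
\]
and choose $\epsilon=\min\{(\delta_m/\rho)^{2/(2\mu+1)},\,|r_m'(0)|^{-1}\}$; note $|r_m'(0)|^{-1}\le\lambda_{1,m}$ by $(\ref{rd0})$, so the admissibility $\epsilon\le\lambda_{1,m}$ is automatic. Distinguishing the two cases according to which term attains the minimum, the first and third terms combine to $c\,\rho^{1/(2\mu+1)}\delta_m^{2\mu/(2\mu+1)}$ (using $\delta\le\delta_m$), while the middle term is in both cases $\le\sqrt{|r_m'(0)|}\,\delta_m$; in the case $\epsilon=|r_m'(0)|^{-1}$ one additionally uses $|r_m'(0)|^{-1}<(\delta_m/\rho)^{2/(2\mu+1)}$ to reabsorb $\epsilon^\mu\rho=|r_m'(0)|^{-\mu}\rho$ into the first term. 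This yields the asserted estimate with an absolute constant $c$.
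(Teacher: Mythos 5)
Your proposal is correct and follows essentially the same route as the paper: the same spectral splitting at a cut-off $\epsilon \le |r_m'(0)|^{-1} \le \lambda_{1,m}$, the same three-term bound $\epsilon^\mu\rho + |r_m'(0)|\sqrt{\epsilon}\,\delta + 2\epsilon^{-1/2}\delta_m$, and the same choice of $\epsilon$ (the paper additionally dispatches $m=0$ via the interpolation inequality, a case your splitting also covers since $r_0\equiv 1$). Where the paper defers the remainder to Lemma~7.11 of \cite{EHaNeu96} with the remark that only the rationality of $r_m$ changes, your logarithmic-differentiation argument $r_m''=r_m(H^2-H')\ge 0$ on $[0,\lambda_{1,m})$, with $H^2\ge\bigl(\sum_j(\lambda_{j,m}-\lambda)^{-1}\bigr)^2\ge\sum_j(\lambda_{j,m}-\lambda)^{-2}\ge H'$, correctly supplies the one point where rationality actually has to be checked, namely the convexity yielding $1-r_m(\lambda)\le|r_m'(0)|\,\lambda$.
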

\begin{proof}
By the interpolation inequality (cf. \cite{EHaNeu96}, page 47) and $x_0^\delta=0$,
\[
 \|x^+\| \leq \rho^{\frac{1}{2\mu+1}} \|y\|^{\frac{2\mu}{2\mu+1}}
      \leq \rho^{\frac{1}{2\mu+1}}(\|y^\delta\|+\|y-y^\delta\|)^\frac{2\mu}{2\mu+1}
      \leq c \rho^{\frac{1}{2\mu+1}} \delta_0^\frac{2\mu}{2\mu+1}\,.
\]
We conclude that the assertion of the lemma is true for $m=0$ by keeping in mind that $r_0'=0$. 
Now let $0<m \leq \kappa$. By assumption, we have
\[
  x^+=T^+y=(T^*T)^\mu w\,,
\]
and we choose a  positive $\epsilon$ such that
\begin{equation} \label{eq1:errlemma}
0<\epsilon \leq |r_m'(0)|^{-1}\,,
\end{equation}
which in particular implies that $\epsilon$ is smaller than or equal to $\lambda_{1,m}$, cf. $(\ref{rd0})$. Next, we introduce
\begin{equation} \label{iterreprat}
  x_m^\delta=g_m(T^*T)T^*y^\delta, \quad g_m(\lambda)=\frac{q_{m-1}(\lambda)}{(1+\lambda\slash \gamma)^{m-1}} \in \Pi_{m-1}\slash(1+\cdot\slash \gamma)^{m-1}\,, 
\end{equation}
where $g_m$ is the rational function that represents the $m$-th SINE-iterate in $\mathcal{Q}_m$.
We obtain
\begin{eqnarray*}
 \|x^+-x_m^\delta\| 
 &\leq
 \|E_\epsilon(x^+-x_m^\delta)\| + \|(I-E_\epsilon)(x^+-x_m^\delta)\| \\
 &\leq
 \|E_\epsilon(x^+-g_m(T^*T)T^*y)\| + \|E_\epsilon(g_m(T^*T)T^*y-x_m^\delta)\| \\
 & \hspace{8cm}  
   +\epsilon^{-\frac{1}{2}}\|y-Tx_m^\delta \| \\
 &\leq 
 \|E_\epsilon r_m(T^*T)(T^*T)^\mu w\| + \|E_\epsilon g_m(T^*T)T^*(y-y^\delta)\| \\
 & \hspace{8cm} 
   +\epsilon^{-\frac{1}{2}}\|y-Tx_m^\delta \| \\
 &\leq
 \|\lambda^\mu r_m(\lambda)\|_{C[0,\epsilon]}\rho + \|\lambda^{\frac{1}{2}}g_m(\lambda)\|_{C[0,\epsilon]}\delta
   +\epsilon^{-\frac{1}{2}}(\|y^\delta-Tx_m^\delta \|+\delta)\,. \\
\end{eqnarray*}
From here, a literal copy of the proof of Lemma~7.11 in \cite{EHaNeu96} will do. The only difference is that $r_m$ is a rational function (cf. $(\ref{rdetail})$) instead of a polynomial.
\end{proof}
Finally, we can prove our main theorem.
\begin{theorem}\label{sineoptreg}
If $y \in \mathcal{R}(T)$ and if {\sc SINE} is stopped according to the discrepancy principle $(\ref{discrep})$ with $m(\delta,y^\delta)$, then {\sc SINE} is an order-optimal regularisation method, i.e., 
if $T^+y \in \mathcal{X}_{\mu,\rho}$, then
\[
  \|T^+y-x_{m(\delta,y^\delta)}^\delta\| \leq c\rho^{\frac{1}{2\mu+1}}\delta^{\frac{2\mu}{2\mu+1}}\,.
\]  
\end{theorem}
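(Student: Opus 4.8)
The plan is to follow the proof of Theorem~7.12 in \cite{EHaNeu96}, feeding the two preparatory estimates lemma~\ref{ressetlem} and lemma~\ref{errlem} into the discrepancy principle. Write $m_\ast:=m(\delta,y^\delta)$ and $x^+=T^+y\in\mathcal{X}_{\mu,\rho}$. The case $m_\ast=0$ is immediate: then $\delta_0=\max\{\|y^\delta\|,\delta\}\le\tau\delta$, and since $r_0'=0$, lemma~\ref{errlem} with $m=0$ gives $\|x^+-x_0^\delta\|\le c\,\rho^{1/(2\mu+1)}\delta^{2\mu/(2\mu+1)}$ at once. So assume $m_\ast\ge1$; then $(\ref{discrep})$ supplies $\|y^\delta-Tx_{m_\ast}^\delta\|\le\tau\delta<\|y^\delta-Tx_{m_\ast-1}^\delta\|$, and since $m_\ast\le\kappa$ both lemmata apply at the indices $m_\ast$ and $m_\ast-1$.

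First I would apply lemma~\ref{errlem} at $m=m_\ast$. Because $\|y^\delta-Tx_{m_\ast}^\delta\|\le\tau\delta$ we have $\delta_{m_\ast}=\max\{\|y^\delta-Tx_{m_\ast}^\delta\|,\delta\}\le\tau\delta$, so the first summand is already of the desired form $c\,\rho^{1/(2\mu+1)}\delta^{2\mu/(2\mu+1)}$. The entire task thus reduces to controlling the noise term, i.e.\ to establishing
\[
|r_{m_\ast}'(0)|\le c\,(\rho/\delta)^{2/(2\mu+1)},
\]
since this turns $\sqrt{|r_{m_\ast}'(0)|}\,\delta_{m_\ast}$ into $c\,\rho^{1/(2\mu+1)}\delta^{2\mu/(2\mu+1)}$ as well.

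The second step extracts precisely such a bound, but one index earlier. From $\|y^\delta-Tx_{m_\ast-1}^\delta\|>\tau\delta$ and lemma~\ref{ressetlem} at $m=m_\ast-1$,
\[
\tau\delta<\|y^\delta-Tx_{m_\ast-1}^\delta\|\le\delta+c\,|r_{m_\ast-1}'(0)|^{-\mu-\frac12}\rho,
\]
so that $(\tau-1)\delta<c\,|r_{m_\ast-1}'(0)|^{-\mu-\frac12}\rho$ and hence $|r_{m_\ast-1}'(0)|\le c\,(\rho/\delta)^{2/(2\mu+1)}$, which is exactly the bound I want but with $m_\ast$ replaced by $m_\ast-1$.

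The heart of the proof is therefore the transfer from $m_\ast-1$ to $m_\ast$. Here I would combine the representation $(\ref{rd0})$, $|r_m'(0)|=\sum_{j=1}^m\lambda_{j,m}^{-1}+(m-1)/\gamma$, with the interlacing of lemma~\ref{interlacing}: since $\lambda_{j,m}>\lambda_{j-1,m-1}$ for $j\ge2$, one gets $\sum_{j=2}^m\lambda_{j,m}^{-1}<\sum_{i=1}^{m-1}\lambda_{i,m-1}^{-1}$ and therefore
\[
|r_m'(0)|<|r_{m-1}'(0)|+\frac{1}{\lambda_{1,m}}+\frac{1}{\gamma}.
\]
The main obstacle is to absorb the single surviving term $1/\lambda_{1,m_\ast}$ (the reduction bottoms out at the base value $1/\lambda_{1,1}$ when $m_\ast=1$). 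Interlacing alone gives only $\lambda_{1,m_\ast}<\lambda_{1,m_\ast-1}$, so a priori the smallest Ritz value can collapse from one step to the next and $|r_m'(0)|$ need not grow by a bounded factor; a naive ratio estimate fails. I expect the resolution to require the minimisation/optimality characterisation of lemma~\ref{rperpq} for the SINE residual at the stopping index, which ties the position of $\lambda_{1,m_\ast}$ to the spectral content of $y^\delta$ and, through the lower discrepancy bound, back to $(\rho/\delta)^{2/(2\mu+1)}$ --- this is exactly the delicate estimate that also drives the CGNE proof of Theorem~7.12 in \cite{EHaNeu96}. Once $|r_{m_\ast}'(0)|\le c\,(\rho/\delta)^{2/(2\mu+1)}$ is in hand, inserting it into the noise term and adding the already-controlled first summand finishes the argument.
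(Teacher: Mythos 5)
Your reduction is the same as the paper's: lemma~\ref{errlem} at $m_\ast$ together with $\delta_{m_\ast}\le\tau\delta$ leaves only the bound $|r_{m_\ast}'(0)|\le c(\rho/\delta)^{2/(2\mu+1)}$ to prove, and lemma~\ref{ressetlem} at $m_\ast-1$ combined with the lower discrepancy inequality gives exactly this bound one index earlier (this is $(\ref{eq1:thm})$ in the paper). But the step you yourself call the heart of the proof --- transferring the bound from $m_\ast-1$ to $m_\ast$ --- is left unproven: you sketch an interlacing estimate $|r_m'(0)|<|r_{m-1}'(0)|+\lambda_{1,m}^{-1}+\gamma^{-1}$, concede that $\lambda_{1,m_\ast}^{-1}$ cannot be controlled this way, and then write that you ``expect the resolution to require'' lemma~\ref{rperpq}. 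That is a statement of where the missing argument should come from, not the argument itself, so there is a genuine gap precisely at the one place where the theorem has nontrivial content beyond the two preparatory lemmata.

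The paper's mechanism is not the interlacing route at all. It sets $\pi_m:=r_{m-1}'(0)-r_m'(0)=u_m(0)$ with $u_m=(r_{m-1}-r_m)/\lambda\in\Pi_{m-1}/(1+\cdot/\gamma)^{m-1}$, uses the orthogonality relations of lemma~\ref{rperpq} to derive the identity $(\ref{eq3:thm})$,
\[
[u_m,u_m]=\Bigl(\pi_m-\tfrac{1}{\gamma}\Bigr)\Bigl([r_{m-1},\tfrac{1}{\lambda}r_{m-1}]-[r_m,\tfrac{1}{\lambda}r_m]\Bigr),
\]
where $[r_j,\tfrac{1}{\lambda}r_j]=\|y^\delta-Tx_j^\delta\|^2$ is controlled by the discrepancy principle on both sides; from there the CGNE argument of Theorem~7.12 in \cite{EHaNeu96} applies verbatim to bound $\pi_{m_\ast}$ by $c(\rho/\delta)^{2/(2\mu+1)}$. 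The genuinely new, rational-specific work is the derivation of this identity (the manipulations with $\tilde q_{m-2}$, $\tilde q_{m-3}$ and the extra $\lambda/\gamma$ terms that have no polynomial counterpart) together with the sign information $\pi_m>1/\gamma$ from $(\ref{eq2:thm})$, none of which appears in your proposal. A secondary point: your case split stops at $m_\ast\ge1$, but for $m_\ast=1$ the index $m_\ast-1=0$ is outside the range of lemma~\ref{ressetlem} and $r_0'\equiv 0$ makes the transfer vacuous; the paper treats $m=1$ separately by observing $\mathcal{Q}_1=\mathcal{K}_1$ and invoking the CGNE result directly.
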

\begin{proof}
By the definition of the stopping index $m(\delta,y^\delta)$ by the discrepancy principle, one obtains
\[
  \delta_{m(\delta,y^\delta)} = \max\{\|y^\delta-Tx_{m(\delta,y^\delta)}^\delta\|,\delta \} \leq \tau \delta\,.
\]
With respect to lemma~\ref{errlem}, it remains to estimate $|r_{m(\delta,y^\delta)}'(0)|$.
For simplicity, we write $m$ instead of $m(\delta,y^\delta)$ in the following and assume, without loss of generality, that $m \geq 2$. ($m=0$ follows from lemma~\ref{errlem} with $r_0'=0$, $m=1$ refers to the space $\mathcal{Q}_1=\mathcal{K}_1$ and Theorem~7.12 in \cite{EHaNeu96} applies).  
By lemma~\ref{ressetlem}, we conclude that 
\[
 \tau \delta < \|y^\delta-Tx_{m-1}^\delta\| \leq \delta + c|r_{m-1}'(0)|^{-\mu-\frac{1}{2}} \rho\,.
\]
Since $\tau > 1$, this implies that
\begin{equation} \label{eq1:thm}
|r_{m-1}'(0)| \leq c\left( \frac{\rho}{\delta} \right)^{\frac{2}{2\mu+1}}\,.
\end{equation}
It remains to estimate
\[
\pi_m := r_{m-1}'(0)-r_m'(0)\,.
\]
The rational function
\[
  u_m(\lambda) := \frac{r_{m-1}(\lambda)-r_m(\lambda)}{\lambda} \in \Pi_{m-1}\slash(1+\cdot \slash \gamma)^{m-1}
\]
satisfies
\[
[u_m,\lambda \varphi]=0 \qquad \mbox{for every}\quad\varphi \in \Pi_{m-2}\slash(1+\cdot \slash \gamma)^{m-2}
\]
due to $(\ref{rperpQm})$ in lemma~\ref{rperpq}.
Moreover, by definition of $\pi_m$ and $(\ref{rd0})$, 
\begin{equation} \label{eq2:thm}
u_m(0)=\pi_m > \frac{1}{\gamma} >0\,.
\end{equation}
Substituting $u_m=\pi_m+\lambda\varphi$, then we have $\varphi \in \Pi_{m-2}\slash(1+\cdot \slash \gamma)^{m-1}$
and
\begin{equation} \label{eq8:thm}
 [u_m,u_m]=\pi_m[u_m,1]+[u_m,\lambda\varphi]\,.
\end{equation}
We first show that
\begin{equation} \label{pimab:1}
 [u_m,\lambda\varphi] = [u_m,u_m-\pi_m] 
                      = -\frac{1}{\gamma}[r_{m-1},\frac{r_{m-1}}{\lambda}]
                        +\frac{1}{\gamma}[r_{m},\frac{r_{m}}{\lambda}]\,.
\end{equation}
Using $(\ref{iterreprat})$, we obtain
\begin{eqnarray*}
u_m(\lambda)&=\frac{r_{m-1}(\lambda)-r_m(\lambda)}{\lambda}
  = \frac{1}{\lambda}
    \left(
   1-\lambda \frac{q_{m-2}(\lambda)}{(1+\lambda \slash \gamma)^{m-2}} \right. \\
     & \hspace{7cm} \left. -
      \left(  
         1-\lambda \frac{q_{m-1}(\lambda)}{(1+\lambda \slash \gamma)^{m-1}}
      \right)
    \right)  \\
  &= \frac{q_{m-1}(\lambda)}{(1+\lambda \slash \gamma)^{m-1}} 
      - \frac{q_{m-2}(\lambda)}{(1+\lambda \slash \gamma)^{m-2}}
\end{eqnarray*}
and
\[
  \pi_m=u_m(0)=q_{m-1}(0)-q_{m-2}(0)\,.
\]
Hence,
\begin{eqnarray*}
[u_m,u_m-\pi_m]&=
  \left[
   \frac{r_{m-1}-r_m}{\lambda},
     \frac{q_{m-1}}{(1+\lambda \slash \gamma)^{m-1}}-\frac{q_{m-2}}{(1+\lambda \slash \gamma)^{m-2}} \right. \\
    & \hspace{6.5cm}  
     +q_{m-2}(0)-q_{m-1}(0)
  \Big] \\
   &=
  \left[
    \frac{r_{m-1}-r_m}{\lambda}, 
     \lambda \frac{\tilde{q}_{m-2}}{(1+\lambda \slash \gamma)^{m-1}}
       -\lambda \frac{\tilde{q}_{m-3}}{(1+\lambda \slash \gamma)^{m-2}}
  \right]\,,
\end{eqnarray*}
with
\begin{eqnarray*}
  \tilde{q}_{m-2}(\lambda)&=\frac{q_{m-1}(\lambda)-q_{m-1}(0)(1+\lambda \slash \gamma)^{m-1}}{\lambda} \in \Pi_{m-2}\,, \\
  \tilde{q}_{m-3}(\lambda)&=\frac{q_{m-2}(\lambda)-q_{m-2}(0)(1+\lambda \slash \gamma)^{m-2}}{\lambda} \quad \left\{ 
  \begin{array}{ccc}
   \in \Pi_{m-3} &\mbox{for}& m \geq 3 \\ 
       =  0   &\mbox{for}& m=2
  \end{array}
\right. \,. \\
\end{eqnarray*}
Since 
\[
\left[
  r_m, \frac{\tilde{q}_{m-2}}{(1+\lambda \slash \gamma)^{m-1}}
       -\frac{\tilde{q}_{m-3}}{(1+\lambda \slash \gamma)^{m-2}}
\right]=0
\]
and
\[
\left[
  r_{m-1}, \frac{\tilde{q}_{m-3}}{(1+\lambda \slash \gamma)^{m-2}}
\right]=0
\]
according to lemma~\ref{rperpq}, we have, again with lemma~\ref{rperpq},
\begin{eqnarray*}
[u_m,u_m-\pi_m]&= 
 \left[
  r_{m-1}, \frac{\tilde{q}_{m-2}}{(1+\lambda \slash \gamma)^{m-1}} 
 \right] \\ 
 &=
 \left[
  r_{m-1}, \frac{\tilde{q}_{m-2}}{(1+\lambda \slash \gamma)^{m-1}} 
  -
  \frac{\tilde{q}_{m-2}}{(1+\lambda \slash \gamma)^{m-2}}
 \right] 
 \\
 &=
 \left[
  r_{m-1}, -\frac{\lambda}{\gamma}\cdot\frac{\tilde{q}_{m-2}}{(1+\lambda \slash \gamma)^{m-1}} 
 \right] \\
 &= 
 \left[
  r_{m-1}, -\frac{\lambda}{\gamma}\cdot \frac{q_{m-1}-q_{m-1}(0)(1+\lambda \slash \gamma)^{m-1}}{\lambda\,(1+\lambda \slash \gamma)^{m-1}} \right] \\ 
 &=
 -\frac{1}{\gamma}\cdot
 \left[
  r_{m-1}, \frac{q_{m-1}}{(1+\lambda \slash \gamma)^{m-1}}-q_{m-1}(0) 
 \right] \\
 &= 
 -\frac{1}{\gamma}\cdot
 \left[
  r_{m-1}, \frac{q_{m-1}}{(1+\lambda \slash \gamma)^{m-1}} 
 \right] \\
 &=
 -\frac{1}{\gamma}\cdot
 \left[
  r_{m-1}, \frac{1-r_m}{\lambda} 
 \right]
 =
 -\frac{1}{\gamma}\cdot
 \left[
  r_{m-1}, \frac{1}{\lambda} 
 \right]
 +
 \frac{1}{\gamma}\cdot
 \left[
  r_{m-1}, \frac{r_m}{\lambda} 
 \right]\,.
\end{eqnarray*}
By lemma~\ref{rperpq} and $(\ref{iterreprat})$, we obtain
\[
 [r_{m-1},\frac{1}{\lambda}] = [r_{m-1},-g_{m-1}+\frac{1}{\lambda}]=[r_{m-1},\frac{1}{\lambda}r_{m-1}]
\]
and, similarly,
\[
 [r_{m-1},\frac{r_m}{\lambda}]=[1-\lambda \frac{q_{m-2}}{(1+\lambda\slash \gamma)^{m-2}},\frac{r_m}{\lambda}]
 = [1,\frac{r_m}{\lambda}]=[\frac{1}{\lambda},r_m]=[\frac{r_m}{\lambda},r_m]
\]
which finally gives $(\ref{pimab:1})$. 
Due to
\[
  [u_m,1]=[r_{m-1},\frac{1}{\lambda}]-[r_m,\frac{1}{\lambda}]
\]
and by lemma~\ref{rperpq}, we obtain analogously
\[
 [r_j,\frac{1}{\lambda}]=[r_j,-g_j+\frac{1}{\lambda}]=[r_j,\frac{1}{\lambda}r_j], \qquad j=m-1,m,
\]
and therefore
\begin{equation} \label{eq9:thm}
  [u_m,1]=[r_{m-1},\frac{1}{\lambda}r_{m-1}]-[r_m,\frac{1}{\lambda}r_m]\,.
\end{equation}
Hence, by setting $(\ref{pimab:1})$ and $(\ref{eq9:thm})$ in $(\ref{eq8:thm})$, we obtain
\begin{equation} \label{eq3:thm}
[u_m,u_m]=\left(\pi_m-\frac{1}{\gamma}\right)[r_{m-1},\frac{1}{\lambda}r_{m-1}]-\left(\pi_m-\frac{1}{\gamma}\right)[r_m,\frac{1}{\lambda}r_m]\,. 
\end{equation}
From here, the proof continues literally as the proof of Theorem~7.12 in \cite{EHaNeu96}.
\end{proof}

\section{Upper bounds for the stopping index} \label{sec:stop}
The number $m(\delta,y^\delta)$ of necessary iterations to meet the discrepancy principle reflects the efficiency of the method. Due to construction, 
SINE will stop faster with respect to the discrepancy principle than any other method that relies on the shift-and-invert Krylov subspace. Here, we will additionally show, that SINE stops earlier than CGNE (or at the same iterate as CGNE, in the worst case) under the same assumptions as in section~\ref{sec:sine}. For this discussion, we designate the stopping index for SINE with parameter $\gamma>0$ as 
$m^\gamma(\delta,y^\delta)$. When $\gamma$ tends to infinity, SINE turns into CGNE. Therefore we designate the stopping index of CGNE with $m^\infty(\delta,y^\delta)$. 
\begin{theorem} \label{SINEfasterthanCGNE} 
If $y \in \mathcal{R}(T)$ and $\gamma >0$ then
$
   0 \leq m^\gamma(\delta,y^\delta) \leq m^\infty(\delta,y^\delta) < \infty\,. 
$
\end{theorem}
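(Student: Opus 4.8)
The plan is to avoid comparing the two Krylov subspaces $\mathcal{K}_m$ and $\mathcal{Q}_m$ directly. They do not nest in general, so one cannot simply argue that a subspace inclusion makes the SINE residual undercut the CGNE residual at a fixed step. Instead I would work entirely at the level of the residual functional and exploit the variational characterisation of lemma~\ref{rperpq}: the SINE residual $r_m$ minimises
\[
  \Phi[\varphi] = \int_0^{\|T\|^2+}\varphi^2(\lambda)\,d\|F_\lambda y^\delta\|^2
\]
over all $\varphi \in \Pi_m/(1+\cdot/\gamma)^{m-1}$ with $\varphi(0)=1$, and $\Phi[r_m]=\|y^\delta-Tx_m^\delta\|^2$. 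Letting $\gamma\to\infty$ recovers the classical polynomial version attached to CGNE via $(\ref{minpropCGNE})$: its residual minimises the same $\Phi$ over $\{p\in\Pi_m : p(0)=1\}$, and $\|y^\delta-Tx_m^{\mathrm{CGNE}}\|^2=\int_0^{\|T\|^2+}p^2\,d\|F_\lambda y^\delta\|^2$ at the minimiser.

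First I would dispose of the two outer inequalities. The bound $0\le m^\gamma(\delta,y^\delta)$ is immediate, and $m^\infty(\delta,y^\delta)<\infty$ is the finite-termination property of CGNE under the discrepancy principle (the mirror of the argument opening section~\ref{sec:sine}, or Theorem~7.12 in \cite{EHaNeu96}). The real content is the middle inequality. Set $M:=m^\infty(\delta,y^\delta)$ and let $p_M\in\Pi_M$, $p_M(0)=1$, be the CGNE residual polynomial at step $M$, so that $(\ref{discrep})$ gives $\int_0^{\|T\|^2+}p_M^2\,d\|F_\lambda y^\delta\|^2=\|y^\delta-Tx_M^{\mathrm{CGNE}}\|^2\le(\tau\delta)^2$.

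The key construction is to lift $p_M$ into the SINE class at the \emph{same} step by setting
\[
  \varphi(\lambda):=\frac{p_M(\lambda)}{(1+\lambda/\gamma)^{M-1}}\in \Pi_M/(1+\cdot/\gamma)^{M-1},
  \qquad \varphi(0)=p_M(0)=1,
\]
which is admissible in the minimisation of lemma~\ref{rperpq} at $m=M$ (the numerator degree and the normalisation $\varphi(0)=1$ are exactly the required constraints). Since $(1+\lambda/\gamma)^{M-1}\ge 1$ for all $\lambda\ge 0$, we have $\varphi^2(\lambda)\le p_M^2(\lambda)$ pointwise on the spectrum, whence $\Phi[\varphi]\le\int_0^{\|T\|^2+}p_M^2\,d\|F_\lambda y^\delta\|^2\le(\tau\delta)^2$. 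By the minimality of $r_M$ from lemma~\ref{rperpq},
\[
  \|y^\delta-Tx_M^\delta\|^2=\Phi[r_M]\le \Phi[\varphi]\le (\tau\delta)^2,
\]
so SINE already satisfies the discrepancy principle at step $M$. As $m^\gamma(\delta,y^\delta)$ is by definition the first index meeting $(\ref{discrep})$, this forces $m^\gamma(\delta,y^\delta)\le M=m^\infty(\delta,y^\delta)$, closing the chain.

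The single place that requires genuine insight—and the only real idea in the argument—is this lifting step: although $\mathcal{K}_M\not\subseteq\mathcal{Q}_M$, the CGNE-optimal residual polynomial becomes an \emph{admissible} SINE residual once divided by the denominator $(1+\cdot/\gamma)^{M-1}$, and that division can only \emph{decrease} the functional because the denominator is $\ge 1$ on $[0,\|T\|^2]$. Everything else is bookkeeping: verifying the degree and normalisation constraints of the SINE class and invoking the two variational characterisations. I would expect no further obstacle, since the comparison is pointwise in $\lambda$ and uses nothing beyond $\gamma>0$.
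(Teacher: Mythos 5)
Your proposal is correct and is essentially the paper's own argument: the paper likewise takes an arbitrary CGNE-admissible polynomial $p_m$ with $p_m(0)=1$, divides by $(1+\lambda/\gamma)^{m-1}\ge 1$ to obtain an admissible SINE residual function with a pointwise smaller square, and concludes via the variational characterisations that $\|y^\delta-Tx_m^{\mbox{\tiny SINE}}\|\le\|y^\delta-Tx_m^{\mbox{\tiny CGNE}}\|$ for every $m$, hence the ordering of the stopping indices. The only cosmetic difference is that you instantiate the comparison solely at $m=M=m^\infty(\delta,y^\delta)$ rather than for all $m$, which changes nothing of substance.
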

\begin{proof}
For $m\geq 1$, let $p_m \in \Pi_m$ be an arbitrary polynomial of degree less than or equal to $m$. With $r_m(\lambda)=p_m(\lambda)\slash(1+\lambda\slash \gamma)^{m-1}$, we have
\[
  \int_0^{\|T\|^2+} r_m^2(\lambda) d\|F_\lambda y^\delta\|^2 \leq \int_0^{\|T\|^2+} p_m^2(\lambda) d\|F_\lambda y^\delta\|^2
\]
and therefore
\[
  \min_{\stackrel{r \in \Pi_m\slash(1+\cdot\slash \gamma)^{m-1}}{r(0)=1}} \int_0^{\|T\|^2+} r_m^2(\lambda) d\|F_\lambda y^\delta\|^2 \leq \int_0^{\|T\|^2+} p_m^2(\lambda) d\|F_\lambda y^\delta\|^2
\]
for all $p_m \in \mathcal{P}_m$ with $p_m(0)=1$, and, finally,
\[
  \min_{\stackrel{r \in \Pi_m\slash(1+\cdot\slash \gamma)^{m-1}}{r(0)=1}} \int_0^{\|T\|^2+} r_m^2(\lambda) d\|F_\lambda y^\delta\|^2 \leq \min_{\stackrel{p \in \Pi_m}{p(0)=1}} \int_0^{\|T\|^2+} p_m^2(\lambda) d\|F_\lambda y^\delta\|^2\,.
\]
Hence, by $(\ref{resrep})$, we have 
\[
  \min_{x \in \mathcal{Q}_m} \|y^\delta-Tx\| \leq \min_{x \in \mathcal{K}_m} \|y^\delta-Tx\|\,.
\]
Due to the definition of CGNE $(\ref{minpropCGNE})$ and SINE $(\ref{minprop})$, it follows immediately that the norm of the residual of the $m$th-SINE iterate $x_m^{\mbox{\tiny SINE}}$ is always smaller than or equal to the norm of the residual of the $m$th-CGNE iterate $x_m^{\mbox{\tiny CGNE}}$, i.e.,
\[
    \|y^\delta-Tx_m^{\mbox{\tiny SINE}}\| \leq \|y^\delta-Tx_m^{\mbox{\tiny CGNE}}\|
\]
holds for all $m \geq 0$, which proves our theorem. (The case $m=0$ is trivial.)
\end{proof}
Theorem~\ref{SINEfasterthanCGNE} basically shows that all upper bounds that are known for CGNE also apply to SINE, but SINE might be faster. We explicitly state some corollaries. As a corollary of theorem~7.13 in \cite{EHaNeu96}, which is due to \cite{NemPoI,NemPoII}, and theorem~\ref{SINEfasterthanCGNE} we obtain the following statement.
\begin{corollary}
If $y \in \mathcal{R}(T)$, $\gamma \in \mathbb{R}^+ \cup \{\infty\}$, and $T^+y \in \mathcal{X}_{\mu,\rho}$, then
\[
   m^\gamma(\delta,y^\delta) \leq c\left(\frac{\rho}{\delta}\right)^{\frac{1}{2\mu+1}}\,
\]
and this estimate is sharp in the sense that the exponent cannot be replaced by a smaller one and that the bound is supposed to hold true for all possible values of $\gamma$.
\end{corollary}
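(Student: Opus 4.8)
The plan is to separate the two assertions: the displayed upper bound, which is essentially a corollary of the two preceding results, and the sharpness claim, which carries the real content. For the upper bound I would first dispatch the case $\gamma=\infty$: this is exactly Theorem~7.13 in \cite{EHaNeu96} (due to \cite{NemPoI,NemPoII}), which states $m^\infty(\delta,y^\delta) \leq c\left(\frac{\rho}{\delta}\right)^{\frac{1}{2\mu+1}}$ for CGNE under the source condition $T^+y \in \mathcal{X}_{\mu,\rho}$. For finite $\gamma>0$ I would then invoke Theorem~\ref{SINEfasterthanCGNE}, which gives $m^\gamma(\delta,y^\delta) \leq m^\infty(\delta,y^\delta)$, so the CGNE bound transfers without change. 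This already establishes the inequality for every $\gamma \in \mathbb{R}^+ \cup \{\infty\}$.

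For the sharpness I would argue in two steps. That the exponent cannot be lowered is immediate from the fact that $\gamma=\infty$ is admitted: the lower-bound half of Theorem~7.13 exhibits, for each $\mu>0$, a family of ill-posed problems and noise levels realising $m^\infty(\delta,y^\delta) \geq c\left(\frac{\rho}{\delta}\right)^{\frac{1}{2\mu+1}}$, and any smaller exponent would contradict this for CGNE. To see that the same sharp order persists for \emph{every} finite $\gamma$, so that no choice of the shift can uniformly improve the bound, I would return to the representation $r_m(\lambda)=p_m(\lambda)/(1+\lambda/\gamma)^{m-1}$ of Lemma~\ref{interlacing}, with $\deg p_m = m$ and $r_m(0)=1$. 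The decisive observation is that $(1+\lambda/\gamma)^{m-1} \to 1$ as $\lambda \to 0^+$, so on the small-$\lambda$ part of the spectrum, precisely where the ill-posedness makes the problem hard and where Nemirovskii's construction concentrates the spectral mass, the rational factor supplies no damping and $r_m$ behaves like a degree-$m$ polynomial normalised to $1$ at the origin. Feeding this into the lower-bound argument of Theorem~7.13, which uses only $r_m(0)=1$ together with the fact that $r_m$ carries $m$ free parameters, then yields $m^\gamma(\delta,y^\delta) \geq c\left(\frac{\rho}{\delta}\right)^{\frac{1}{2\mu+1}}$ for the same hard problems and each fixed $\gamma$.

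The hard part is exactly this last transfer: I must make precise that the denominator does not accelerate convergence on the active part of the spectrum. Concretely, in the sharp construction the relevant scale $\lambda_{1,m}$ of the smallest Ritz value and the stopping index $m$ are both coupled to $\delta$, and one has to verify that the product $m\lambda/\gamma$ stays bounded there, so that $(1+\lambda/\gamma)^{-(m-1)}$ remains bounded away from $0$. Once this is checked, the polynomial lower bound passes to the rational setting with the exponent unchanged, and everything else is a routine re-use of the CGNE estimates already cited.
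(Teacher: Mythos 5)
Your derivation of the displayed bound is exactly the paper's: the case $\gamma=\infty$ is Theorem~7.13 of \cite{EHaNeu96} (due to \cite{NemPoI,NemPoII}), and for finite $\gamma$ the bound transfers verbatim through theorem~\ref{SINEfasterthanCGNE}. The first half of your sharpness argument is also the intended one: since $\gamma=\infty$ is an admissible value, the Nemirovskii lower bound for CGNE already shows that no smaller exponent can work for a bound that is required to hold for \emph{all} $\gamma$. That is the entire content of the corollary's sharpness clause --- it should be read as ``the exponent cannot be replaced by a smaller one \emph{given} that the bound must hold true for all possible values of $\gamma$''. At this point you are done, and your proof coincides with the paper's one-line derivation.

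The remaining third of your proposal attempts something strictly stronger, namely a matching lower bound $m^\gamma(\delta,y^\delta)\geq c(\rho/\delta)^{1/(2\mu+1)}$ for each \emph{fixed} finite $\gamma$. The corollary does not assert this, the paper does not prove it, and your sketch does not close the gap you yourself flag: you would need $(1+\lambda/\gamma)^{-(m-1)}$ to stay bounded away from $0$ on the part of the spectrum carrying the mass in the hard instances, i.e.\ $m\lambda/\gamma=O(1)$ there, and nothing in Nemirovskii's construction guarantees that the active spectral scale shrinks like $\gamma/m$ as $\delta\to 0$; moreover the lower-bound argument for CGNE uses more than ``$r_m(0)=1$ plus $m$ free parameters'', so it does not transfer formally. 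The paper's own experiment in section~\ref{sec:numexp} ($m^\gamma=2$ versus $m^\infty=19$ on the same data) is a warning that per-$\gamma$ sharpness is at best delicate and quite possibly false --- indeed, the possibility that SINE stops much earlier is the selling point of the method. Keep the first two steps, which reproduce the paper's proof, and drop the attempted per-$\gamma$ lower bound or state it as an open question rather than as part of the proof.
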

Theorem~7.14 and theorem~7.15 in \cite{EHaNeu96} also hold literally for SINE as simple corollaries of theorem~\ref{SINEfasterthanCGNE}

\section{Illustration and discussion} \label{sec:numexp}
We use the multiplication operator $T: L_2(0,1) \rightarrow L_2(0,1)$, $Tf(t):=tf(t)$, in order to illustrate the theoretical findings. The range of $T$ is not closed. For example, it can be readily seen that any constant function apart from zero is in the closure $\overline{\mathcal{R}(T)}$ of the range $\mathcal{R}(T)$, but not in the range $\mathcal{R}(T)$ itself. We further have $T^*=T$ and $T^*Tf(t)=t^2f(t)$. For the fractional powers of the operator $T^*T$, we obtain $(T^*T)^\mu f(t)=t^{2\mu}f(t)$, $\mu > 0$. We choose the exact solutions $x^+_1=t$ and $x^+_2=t^3$ with right-hand sides
$y_1=Tx_1^+=t^2$ and $y_2=Tx_2^+=t^4$, respectively. Then $x_1^+ \in \mathcal{X}_{1\slash 2,1}$ and $x_2^+ \in \mathcal{X}_{3\slash 2,1}$. 
We use the perturbed right-hand sides $y_i^\delta=y_i+\delta$, $i=1,2$, with $\|y_i^\delta-y_i\|=\delta$. The linear systems with the perturbed right-hand sides $y_i^\delta$, $i=1,2$, do not have a solution and a regularisation is necessary. 
We now use SINE to compute regularised solutions, where the iteration is stopped according to the discrepancy principle with $\tau=1001\slash 1000$.
In figure~\ref{ratecheckcont}, the $L_2$-norm of the error of the computed regularisation is plotted versus $\delta^{-1}$. 
The red circle-marked line belongs to the error with respect to $x_1^+=t$ and the green, square-marked line belongs to the error with respect to $x_2^+=t^3$. As predicted by theorem~\ref{sineoptreg}, the convergence to the exact solution with decreasing perturbation $\delta$ is at least $\delta^{\frac{1}{2}}$ or $\delta^{\frac{3}{2}}$, respectively, which are indicated by the gray lines. The operator is simple enough such that all computations could be conducted exactly by using the computer algebra system Maple.
\begin{figure}
\begin{center}
\begin{tikzpicture}

\begin{axis}[
width=14cm, height=7cm, scale only axis,
xmin=9, xmax=11000, ymode=log, xmode=log, ymin=2e-4, ymax=2e-1,
xtick={10, 100, 1000,10000}, 
ytick={1,1e-1,1e-2,1e-3,1e-4,1e-5,1e-6}, yminorticks=false
]
\addplot[color=red, mark=*, thick, solid] coordinates {
(10,   0.115690616)
(100,  0.031844382)
(1000, 0.0091457408)
(10000,0.0023973761)
};
\addplot[gray, thick, domain=10:10000]
  { 0.5*x^(-1/2) };
\addplot[color=green, mark=square*, thick, solid] coordinates {
(10,   0.0743658494)
(100,  0.0114035821)
(1000, 0.001901342909)
(10000,0.00026552051)
};
\addplot[gray, thick, domain=10:10000]
  { 0.5*x^(-3/4) };
\end{axis}
\end{tikzpicture}%
\end{center}
\caption{$L_2$-error versus inverse noise $\delta^{-1}$}
\label{ratecheckcont}
\end{figure}
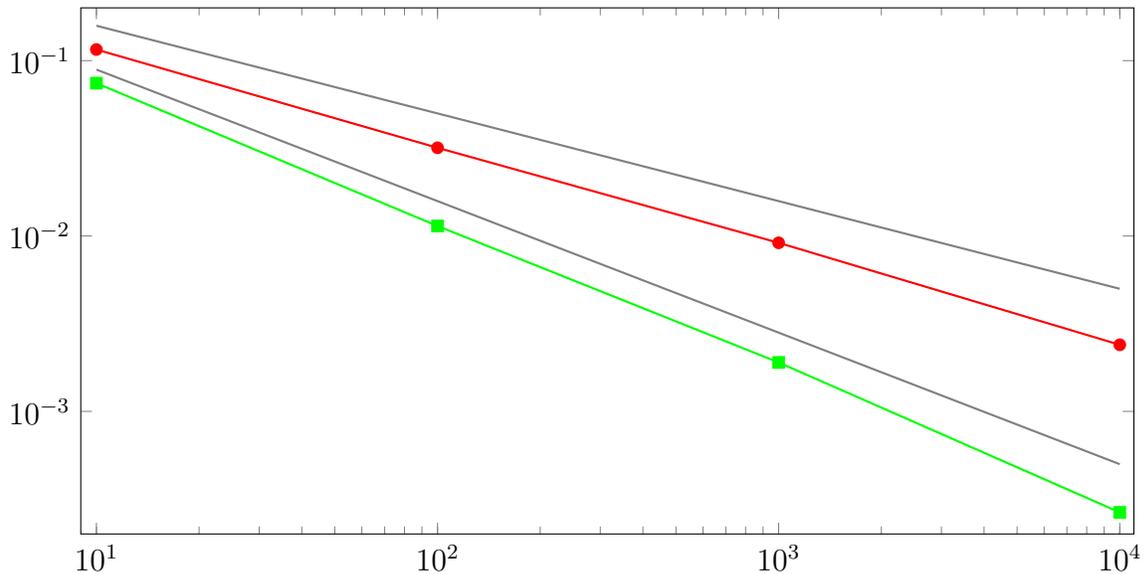
Due to construction, SINE will stop faster with respect to the discrepancy principle than any other method computing regularisations in the shift-and-invert Krylov subspace $\mathcal{Q}_m$. 
That is, where these methods have been used successfully, SINE should also be a very good choice.
That SINE might also be useful with respect to regularisation schemes that do not use the shift-and-invert Krylov subspace $\mathcal{Q}_m$, will be illustrated by another simple experiment, where we compare SINE and CGNE.
For $\gamma=\frac{1}{1000}$ and $x^+=t$, $y^\delta=t^2+\delta$, $\delta=\frac{1}{1000}$, SINE stops after two steps with
\[
x_2=-\frac{21}{5000}t^3+\frac{1507}{1500}t=c_1T^*y^\delta+c_2 (1+T^*T\slash \gamma)^{-1}T^*y^\delta \in \mathcal{Q}_2\,,
\]
$c_1=-21\slash 5000$, $c_2=15070063\slash 15000$, 
whereas CGNE produces a polynomial of degree $39$ after $19$ steps. Both methods have been stopped according to the discrepancy principle with $\tau=\frac{1001}{1000}$. In figure~\ref{CGNEvsSINE}, it can be seen that the SINE regularisation on the left-hand side is qualitatively better than the CGNE regularisation on the right-hand side.  
The experiment also shows that the stopping index of SINE can be significantly smaller than the stopping index of CGNE. With the designations of section~\ref{sec:stop}, we have
\[
  m^\gamma(\delta,y^\delta)=2 < 19=m^\infty(\delta,y^\delta).
\]

\begin{figure}
\begin{center}
\begin{center}
\begin{tikzpicture}
\begin{axis}[
width=6cm, height=6cm, scale only axis,
xmin=0, xmax=1, ymin=0, ymax=1
]
\addplot[red, thick, domain=0:1]
  { -21/5000*x^3+1507/1500*x};
\end{axis}
\end{tikzpicture}
\hspace*{1cm}
\begin{tikzpicture}
\begin{axis}[
width=6cm, height=6cm, scale only axis,
xmin=0.0, xmax=1, ymin=0.0, ymax=1
]
\addplot[red, thick, domain=0:0.4]
  {  
     -3.333889334*10^9*x^37 
     + 3.307942782*10^8*x^39 
     + 1.555859473*10^10*x^35 
     -1.139752527*10^11*x^25
     -4.461177603*10^10*x^33
     -1.263317845*10^11*x^29
     - 3.741947655*10^10*x^21
     + 8.793822957*10^10*x^31 
     +1.481132113*10^10*x^19 + 1.076851218*10^9*x^15-1.923674702*10^8*x^13+7.386323446*10^10*x^23
     -2.369285579*10^6*x^9+149163.9396*x^7-5796.0929*x^5-4.555254788*10^9*x^17+1.36817676*10^11*x^27
     +2.532581405*10^7*x^11+118.875264*x^3+0.1248229156*x
  };
\addplot[red, thick, domain=0.4:0.9]
   {
 0.2789678843e-1+0.4598231983e-1*(x-.5000000000)^2+17.60189170*(x-.5000000000)^3-23.87329759*(x-.5000000000)^4-1894.925027*(x-.5000000000)^5+1973.595399*(x-.5000000000)^6+94551.45465*(x-.5000000000)^7-45881.24465*(x-.5000000000)^8-2667338.658*(x-.5000000000)^9-452701.7354*(x-.5000000000)^10+46749297.65*(x-.5000000000)^11+40679024.90*(x-.5000000000)^12-525792077.3*(x-.5000000000)^13-860354826.2*(x-.5000000000)^14+3683368333.*(x-.5000000000)^15+9770363508.*(x-.5000000000)^16-0.1333428914e11*(x-.5000000000)^17-0.6660746580e11*(x-.5000000000)^18-7376246144.*(x-.5000000000)^19+0.2662976910e12*(x-.5000000000)^20+0.3197823055e12*(x-.5000000000)^21-0.4853866128e12*(x-.5000000000)^22-0.1442678527e13*(x-.5000000000)^23-0.4719845036e12*(x-.5000000000)^24+0.2562638869e13*(x-.5000000000)^25+0.3912492765e13*(x-.5000000000)^26+0.3782516997e12*(x-.5000000000)^27-0.5257116663e13*(x-.5000000000)^28-0.6823429659e13*(x-.5000000000)^29-0.2465431763e13*(x-.5000000000)^30+0.3510831466e13*(x-.5000000000)^31+0.6328691293e13*(x-.5000000000)^32+0.5371450065e13*(x-.5000000000)^33+0.2986020430e13*(x-.5000000000)^34+0.1160976032e13*(x-.5000000000)^35+0.3162141609e12*(x-.5000000000)^36+0.5794575070e11*(x-.5000000000)^37+6450488424.*(x-.5000000000)^38+330794278.2*(x-.5000000000)^39+.9484194288*x
   };
\addplot[red, thick, domain=0.9:1]
  {
-1.5959068449052952056+340.08793846041800437*(x-1.)^2+31906.407124184181297*(x-1.)^3+1669964.4440696963231*(x-1.)^4+55412628.629165945543*(x-1.)^5+1263298808.5102602327*(x-1.)^6+20908582776.767939478*(x-1.)^7+261467899679.56600112*(x-1.)^8+2546219965719.4283168*(x-1.)^9+19767834892368.044623*(x-1.)^10+124660291735918.40881*(x-1.)^11+648301151477175.55025*(x-1.)^12+2815101749000044.4631*(x-1.)^13+10311852697782690.589*(x-1.)^14+32137302787866479.391*(x-1.)^15+85821651306958571.006*(x-1.)^16+197542493437015194.37*(x-1.)^17+393831875492617749.19*(x-1.)^18+682747936969055633.92*(x-1.)^19+1032433623248708481.8*(x-1.)^20+1365010256663444170.9*(x-1.)^21+1580462869598788458.3*(x-1.)^22+1603980221970814019.9*(x-1.)^23+1427111346790524937.2*(x-1.)^24+1112529656923535935.3*(x-1.)^25+758855602200783313.27*(x-1.)^26+451864821740558898.53*(x-1.)^27+234109567094136738.53*(x-1.)^28+105055412977416624.94*(x-1.)^29+40586490794111979.521*(x-1.)^30+13392312169354097.630*(x-1.)^31+3735052407457255.7466*(x-1.)^32+868283051271217.92000*(x-1.)^33+165097351910729.14065*(x-1.)^34+25003348472654.613493*(x-1.)^35+2899775002907.5847031*(x-1.)^36+241784670795.76588119*(x-1.)^37+12900976848.938306332*(x-1.)^38+330794278.17790529057*(x-1.)^39+2.5987772118873411219*x    
  };
\end{axis}
\end{tikzpicture}
\end{center}
\end{center}
\caption{Left-hand side SINE-regularisation attained in 2nd step, right-hand side CGNE-regularisation attained in 19th step}
\label{CGNEvsSINE}
\end{figure}
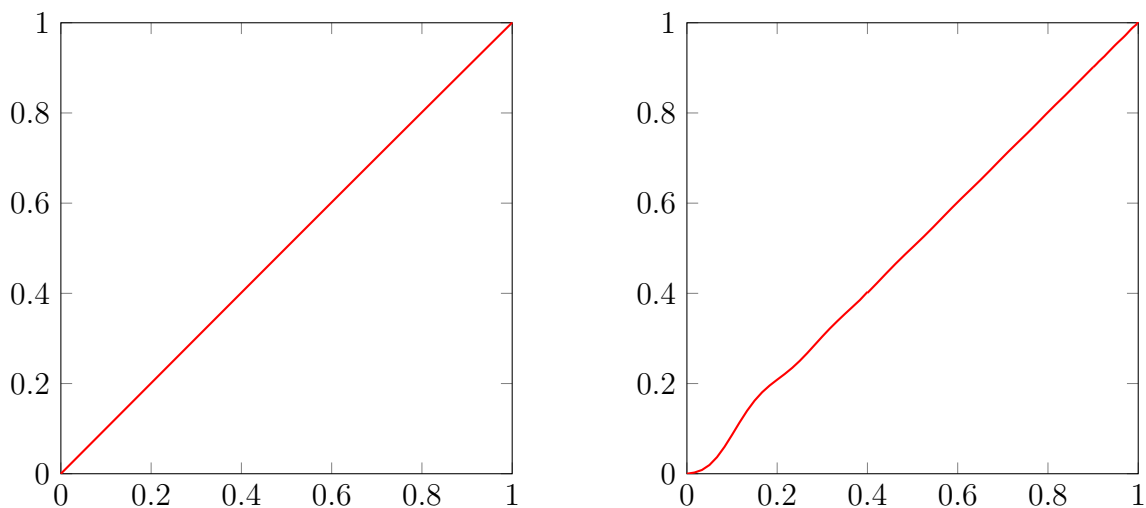

Altogether, the theory and the experiment suggest that SINE is a valid order-optimal regularisation scheme.
It is also hoped, that the given analysis inspires further research on the regularisation properties of rational Krylov subspace methods. For example, it is immediately clear that theorem~\ref{SINEfasterthanCGNE} carries over to rational Krylov subspaces with arbitrary negative real poles, when the method is defined analogous to~$(\ref{minprop})$.
Even choosing negative poles at random can only improve on CGNE with respect to the stopping index. 
These more general rational Krylov subspace methods might also be seen as accelerations of the nonstationary iterated Tikhonov iteration (e.g. \cite{JinStal12}) or of method ${\sl (ii)}$ in example~\ref{methodswithqm} with varying step sizes. While there is only one polynomial Krylov subspace, rational Krylov subspaces inspire a wide range of methods that might be adapted to the needs at hand. 
As a possible application, rational Krylov subspaces have been successfully used to accelerate computations related to seismic imaging (e.g. \cite{DruReZasZimm18,Liuetal19,Zhouetal18,ZimmerDiss18}), which is known to be an ill-posed inverse problem. 
\section*{Acknowledgments}
\emph{
Funded by the Deutsche Forschungsgemeinschaft (DFG, German Research Foundation) -- Project-ID 258734477 -- SFB 1173
}
\newpage
\section*{References}

\bibliographystyle{plain}
\bibliography{lit}

\end{document}